\newtheorem{definition}{Definition}
\newtheorem{theorem}[definition]{Theorem}
\newtheorem{lemma}[definition]{Lemma}
\newcommand{\boxicity}{{\sc{Boxicity}}\xspace}
\newcommand{\sm}{\setminus}
\newlength{\atextwidth}
\DeclareMathOperator*{\cart}{\times}
\DeclareMathOperator*{\boxOp}{box}
\DeclareMathOperator*{\bwOp}{bw}
\DeclareMathOperator*{\pwOp}{pw}
\DeclareMathOperator*{\twOp}{tw}
\newcommand{\problemdec}[3]{
  \vspace{1mm}
\noindent\fbox{
  \begin{minipage}{\atextwidth}
  \begin{tabular*}{\textwidth}{@{\extracolsep{\fill}}lr} #1 \\ \end{tabular*}
  {\bf{Input:}} #2  \\
  {\bf{Question:}} #3
  \end{minipage}
  }
  \vspace{1mm}
}
\tikzstyle{background}=[rectangle,fill=red!10,inner sep=0.2cm,rounded corners=1mm,minimum size=1cm]
\tikzstyle{vertex}=[circle,draw,minimum size=20pt,inner sep=0pt]
\tikzstyle{edge} = [draw,thick,-]
\newcommand{\Gsegment}[1]{\ensuremath{G_{#1}}}
\begin{document}


\title{Structural parameterizations for boxicity}
\author{Henning Bruhn, Morgan Chopin, Felix Joos and Oliver Schaudt}
\date{}


%
\maketitle

\begin{abstract}
The boxicity of a graph $G$  is the least integer $d$ such that $G$ has an
intersection model of axis-aligned $d$-dimensional boxes.
\boxicity, the problem of deciding whether a given graph $G$ has boxicity at most~$d$, 
is NP-complete for every fixed $d \ge 2$.
We show that \boxicity is fixed-parameter tractable when parameterized by the cluster vertex deletion number of the input graph.
This generalizes the result of Adiga~et~al.,  that \boxicity is fixed-parameter tractable in the vertex cover number.

Moreover, we show that \boxicity admits an 
additive $1$-approximation when parameterized by the pathwidth of the input graph.

Finally, we provide evidence in favor of a conjecture of Adiga et al.~that \boxicity remains NP-complete when parameterized by the treewidth.
\end{abstract}


\section{Introduction}

Every graph $G$ can be represented as an intersection graph of axis-aligned boxes in $\mathbb R^d$, 
provided $d$ is large enough. The \emph{boxicity} of $G$,
introduced by~Roberts~\cite{roberts69}, 
 is the smallest dimension $d$ for which 
this is possible. 
We denote 
the corresponding decision problem by \boxicity:
given~$G$ and~$d\in\mathbb N$, determine whether~$G$ has boxicity at most~$d$.

Boxicity has received a fair 
amount of attention.
This is partially due to the wider context of graph representations,
but also because graphs of low boxicity are interesting from an algorithmic point 
of view. 
While 
many hard problems remain so for graphs of bounded boxicity, some become  
solvable in polynomial time, notably max-weighted clique (as observed by Spinrad~\cite[p.~36]{Spi03}).

Cozzens~\cite{cozzens81} showed that
\boxicity is NP-complete.
To cope with this hardness result, several 
authors~\cite{adiga12, adiga10-b, ganian11}
studied the parameterized complexity of \boxicity. Since the problem remains
NP-complete for constant $d\geq 2$ (Yannakakis~\cite{yannakakis82} 
and~Kratochv{\'i}l~\cite{kratochvil94}),  boxicity itself is ruled out as 
parameter. Instead more structural parameters 
have been considered. Our work follows this line. We prove:

\begin{theorem}\label{dtcthm}
\boxicity is fixed-parameter tractable when parameterized
by cluster vertex deletion number.
\end{theorem}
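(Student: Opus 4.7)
The plan is to combine a structural bound on the boxicity with a kernelization by twin and clique removal. First, compute a cluster vertex deletion set $D$ with $|D|\le k$ in FPT time, which is possible by known algorithms for cluster vertex deletion. Write $G-D = C_1 \sqcup \cdots \sqcup C_m$ and, for every $v \in V(G)\sm D$, define its \emph{type} $\tau(v) := N(v)\cap D$, giving at most $2^k$ distinct types.

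The starting point is the \emph{a priori} bound $\boxOp(G)\le k+1$. Since $G-D$ is a disjoint union of cliques, $\boxOp(G-D)\le 1$; and a one-vertex extension can raise boxicity by at most $1$, as one extra dimension suffices to carve out the new vertex's neighborhood while keeping all original pairs intersecting in that dimension. Iterating over $D$ yields the bound, so it suffices to decide, for each $d\in\{1,\dots,k+1\}$, whether $\boxOp(G)\le d$. A second, easy reduction is twin removal: two vertices $u,v$ of the same clique with $\tau(u)=\tau(v)$ are true twins, since $N_G[u]=N_G[v]=C_i\cup \tau(u)$. Deleting a true twin preserves boxicity (copy the twin's box in any representation), so after exhaustive application every clique has at most $2^k$ vertices and is characterized by its \emph{signature}, the set of types it realizes; there are at most $2^{2^k}$ possible signatures.

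The main step, and the main obstacle, is to bound the number of cliques. The reduction rule I would aim for is: \emph{if a signature $\sigma$ is realized by more than $g(k)$ cliques, deleting one such clique does not alter the answer to ``$\boxOp(G)\le d$''}. The intuition is that once enough identical cliques coexist, their joint contribution to the boxicity saturates at the upper bound $k+1$, making an additional copy redundant. The non-trivial direction of the rule is to extend a $d$-box representation of $G-C_i$ back to one of $G$: reintroduce $C_i$ by copying the boxes of another surviving clique of signature $\sigma$ and translating them into an unoccupied region of some dimension, while ensuring that the translated boxes still hit the $D$-boxes as prescribed by $\sigma$ and miss the boxes of all other cliques. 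Proving that, for sufficiently large $g(k)$, such an unoccupied region always exists is the crux; it may require refining the signature to record more of the adjacency pattern between cliques. Once the reduction is in place, the kernel has size $k + 2^{2^k}\cdot g(k)\cdot 2^k$, a function of $k$ alone, and \boxicity can be decided on it by a brute-force enumeration of interval representations in time $n^{O(n)}$, completing the FPT algorithm.
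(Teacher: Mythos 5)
Your scaffolding matches the paper's almost exactly: compute a small cluster vertex deletion set, delete true twins so that each cluster has at most $2^k$ vertices and is determined by the set of traces $N(v)\cap X$ of its vertices, group the clusters into equivalence classes by this ``signature'', and argue that once an equivalence class is large enough, one of its clusters can be deleted without changing the boxicity. The a priori bound $\boxOp(G)\le k+1$ you give is fine (the paper uses the weaker bound $2^k+k+1$ via pathwidth, which is all it needs). But the one step you explicitly leave open --- that deleting a cluster from a sufficiently large equivalence class preserves the answer --- is precisely the paper's Lemma~\ref{clusterdellem}, and it is the entire substance of the proof; everything else is routine. So the proposal has a genuine gap, and you acknowledge it yourself.

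Moreover, the mechanism you sketch for closing it (``translate the copied boxes into an unoccupied region of some dimension'') is not how the argument can go: there need not be any unoccupied region, and a translate of a clique's boxes will generically collide with the original clique and with other cliques. What the paper does instead is: rescale so that all corners of the $X$-boxes lie on a grid of side $O(k)$ in each of the $d\le 2^k+k+1$ dimensions; discard the few clusters whose boxes contain grid points; normalize each remaining (``thin'') cluster so that, within any grid cell, every box corner coincides with a Helly point $p(C)$ common to all boxes of the cluster; then pigeonhole over the $(2k+1)^{2d\cdot 2^k}$ possible grid positions to find \emph{two} surviving clusters $C,C'$ of the same signature whose corresponding boxes occupy identical grid cells. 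The deleted cluster $C^*$ is reinserted as a copy of $C$ shifted by $\epsilon$ \emph{towards} $p(C')$; the normalization is exactly what guarantees that this shift separates the copy from $C$ itself in every dimension where they could otherwise touch, while the same-grid-position property preserves all adjacencies to $X$ and all non-adjacencies to other clusters. Without the grid discretization, the thinning step, and the two-cluster pigeonhole, the insertion argument does not go through, so the crux you defer is not a technicality but the theorem.
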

The
\emph{cluster vertex deletion} number
is
the minimum number of vertices that have to be deleted to 
 get a disjoint union of complete graphs or \emph{cluster graph}.
As discussed by Doucha and Kratochv\'il~\cite{doucha12}
cluster vertex deletion is an
intermediate parameterization between vertex cover and cliquewidth.
A \emph{$d$-box representation} of a graph $G$ is a representation of $G$ 
as intersection graph of axis-aligned boxes in $\mathbb R^d$.

\begin{theorem}\label{pwalgothm}
\sloppy Finding a $d$-box representation of~$G$ such that~${d \leq \boxOp(G) + 1}$ can be done in~$f(\pwOp(G)) \cdot |V(G)|$ time where~$\pwOp(G)$ is the pathwidth of~$G$.
\end{theorem}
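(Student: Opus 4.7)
The plan is to combine a structural trick with a standard dynamic programming on a path decomposition. First, I would compute a path decomposition of $G$ of width $p = \pwOp(G)$ in time $f(p)\cdot|V(G)|$, e.g.\ by Bodlaender's linear-time algorithm. From this decomposition I would build an interval supergraph $I^*$ of $G$ by assigning every vertex $v$ the interval whose endpoints are the smallest and the largest index of a bag containing $v$. Any two adjacent vertices of $G$ share a bag, so $I^*$ contains $G$ as a subgraph, and the clique number of $I^*$ is at most $p+1$.

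The key observation I would exploit is that $I^*$ may be used as a ``free'' coordinate in a box representation: if $I_1,\ldots,I_b$ is an optimal box representation of $G$ with $b = \boxOp(G)$, then $(I^*, I_1,\ldots,I_b)$ is also a box representation, because intersecting with a supergraph $I^* \supseteq G$ removes no edge of $G$, while $\bigcap_{i=1}^b E(I_i) = E(G)$ already contains no non-edge of $G$. Hence the minimum $d$ such that $G$ has a $d$-box representation whose first coordinate is $I^*$ is at most $\boxOp(G)+1$, and it suffices to compute such a minimum-$d$ representation.

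The algorithm then performs dynamic programming along the path decomposition to find the smallest such $d$. Using the known bound $\boxOp(G) \le \pwOp(G)+O(1)$ (Chandran and Sivadasan), we may restrict attention to $d = O(p)$ coordinates. Processing bags from left to right, the DP state would record, for every vertex currently in the bag and every coordinate, the relative order of the interval endpoints already placed together with a flag indicating whether that interval is still open, and, for every pair of non-adjacent vertices currently in the bag, the set of coordinates in which their non-edge has already been realised by disjoint intervals. Since a bag has at most $p+1$ vertices and there are $O(p)$ coordinates, the number of states per bag is bounded by a function of $p$ alone, giving overall running time $f(\pwOp(G))\cdot|V(G)|$.

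The main obstacle will be verifying that this compressed state is sufficient. Concretely, once a vertex leaves the active window of the path decomposition, only the open/closed status of its intervals relative to the endpoints currently visible should matter for future decisions, and no global information about the vertex needs to be retained. Establishing this carefully, and simultaneously guaranteeing that the partial interval assignments maintained by the DP always extend to a valid global interval representation in every coordinate (and that every non-edge of $G$ is eventually realised in some coordinate), is where the technical work of the proof lies.
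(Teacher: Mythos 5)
Your proposal follows essentially the same route as the paper: the interval graph $I^*$ read off the path decomposition serves as the single extra dimension (accounting for the additive $1$), and the remaining $d \le \pwOp(G)+2$ coordinates are found by a dynamic program over the bags whose per-bag state space is bounded by a function of the pathwidth. The technical verification you defer---that the compressed local state suffices---is exactly what the paper supplies via a ``consistency'' relation between tuples of interval graphs on consecutive bags and a merging lemma showing that consistent local representations glue into global ones whose only spurious edges join vertices that never share a bag, and are therefore removed by $I^*$.
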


A natural parameter for \boxicity is the treewidth $\twOp(G)$ of a graph $G$, in particular as 
Chandran~and~Sivadasan~\cite{chandran07} proved that
$\boxOp(G) \leq \twOp(G) + 2$. However, 
Adiga, Chitnis and Saurabh~\cite{adiga10-b} conjecture that \boxicity is NP-complete on graphs of bounded treewidth.
Our last result provides evidence in favor of this conjecture.
For this, we mention the observation of Roberts~\cite{roberts69}
that a graph $G$ has boxicity~$d$ if and only if $G$ can be expressed as the intersection
of $d$ interval graphs.

\begin{theorem}\label{bandthm}
There is an infinite family of graphs $G$ of boxicity~$2$ and bandwidth $\mathcal O(1)$ such that, among any pair of interval graphs whose intersection is $G$, at least one has treewidth $\Omega(|V(G)|)$.
\end{theorem}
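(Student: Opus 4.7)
The plan is to construct an explicit family $\{G_n\}_{n \in \mathbb{N}}$ with $|V(G_n)| = \Theta(n)$, bandwidth $O(1)$, and boxicity exactly~$2$, and then show by a rigidity argument that any representation of $G_n$ as $I_1 \cap I_2$ with $I_1, I_2$ interval graphs must produce an interval graph $I_j$ whose clique number is $\Omega(n)$. Since interval graphs satisfy $\twOp = \omega - 1$, this yields the desired treewidth lower bound.

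I would seek to define $G_n$ as a chain of $n$ copies of a small non-interval gadget $H$ glued together along shared vertices. A bounded-bandwidth ordering of $G_n$ arises trivially from the linear chain, and an explicit 2-box representation can be given by hand, certifying $\boxOp(G_n) \leq 2$; the presence of an induced $C_4$ inside each copy of $H$ forbids boxicity~$1$ (non-chordality) and pins $\boxOp(G_n)$ to exactly~$2$. The real work is therefore in the treewidth lower bound.

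For the lower bound, I would argue as follows. Assume $G_n = I_1 \cap I_2$ with $I_1, I_2$ interval. Each induced $C_4$ of $G_n$ has two diagonals, both non-edges of $G_n$, so each is an edge of at most one of $I_1, I_2$. Because each $I_j$ is chordal, each induced $C_4$ must have at least one chord in $I_j$, forcing the two diagonals of every induced $C_4$ to be split, one to $I_1$ and one to $I_2$. I would then use the gadget's rigidity to propagate this split: engineer $H$ so that certain vertices straddling consecutive copies of $H$ would form a forbidden asteroidal triple in one of the $I_j$ whenever the diagonal split in one copy disagrees with that of its neighbour. This forces all $n$ splits to agree and places $\Omega(n)$ chords in, say, $I_1$; with the intervals inherited from a representation of $I_1$, these chords correspond to $\Omega(n)$ intervals required to share a common point on the line, hence a clique of size $\Omega(n)$ in $I_1$.

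The principal obstacle I anticipate is the gadget design. The ladder graph, which is the most obvious chain of $C_4$'s, is too flexible: the diagonals can be split alternately between $I_1$ and $I_2$, yielding both interval graphs with clique number only~$3$ (as can be checked by an explicit interval representation with intervals of the form $u_i = [i, i+1.5]$ and $v_i = [i-0.3,i+0.7]$, and its mirror image). A working gadget $H$ must therefore encode an additional coupling between chord-orientations of consecutive copies, likely through extra edges between adjacent copies that create an asteroidal triple whenever the diagonal splits are mismatched. Verifying rigidity then reduces to a case analysis using both the chordality and the asteroidal-triple-freeness characterisations of interval graphs, and is the part of the argument that I expect to require the most care.
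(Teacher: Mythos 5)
Your setup is sound as far as it goes: a chain of non-interval gadgets gives bounded bandwidth, an explicit $2$-box drawing gives boxicity $\le 2$, an induced $C_4$ rules out boxicity $1$, and your observation that the two diagonals of every induced $C_4$ must be split one-to-each between $I_1$ and $I_2$ is exactly the starting point of the paper's argument (its statement~\eqref{uvalternative}). The genuine gap is in the last step of your lower bound. Even if you succeed in forcing all $n$ diagonal splits to agree, you obtain $\Omega(n)$ forced chords in $I_1$ that are pairwise \emph{vertex-disjoint} edges, and an interval graph containing $n$ disjoint edges can have clique number $2$: lay the $n$ pairs out along the line at far-apart locations, with only the local overlaps needed for the inter-copy edges. ``$\Omega(n)$ chords'' does not translate into ``$\Omega(n)$ intervals sharing a common point''; for that you need $\Omega(n)$ intervals that \emph{pairwise} intersect, i.e.\ you need to force nesting or accumulation, which is a global geometric statement that no amount of local chord bookkeeping delivers. (Compare the $K_{2,n}$ warm-up, where the forced chords all live on a common vertex set $\{v_1,\dots,v_n\}$ and hence do form a clique --- but that graph has unbounded degree, so you cannot use it in a bounded-bandwidth chain.)

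The paper's proof closes exactly this gap, and does so by a mechanism opposite to the one you propose: it shows that the split \emph{alternates} between consecutive copies (the non-edge $u^iv^i$ flips from $I_k$ to $I_{3-k}$), and, more importantly, that the $2$-dimensional orientation of each copy is forced to rotate by $90^{\circ}$ relative to its predecessor. The boxes of the vertices $v^1,\dots,v^n$ are therefore arranged in a spiral, which must either keep growing or keep shrinking; either way some vertical line meets $n/4$ of them, so $n/4$ of the corresponding intervals in one dimension share a point and form a clique. That rotation/spiral argument is the missing idea: it is what converts local rigidity of the gadgets into the global pairwise-intersection needed for a large clique, and it is not obtainable from ``all splits agree'' plus chord counting. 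You correctly sensed that the ladder is too flexible and that gadget design is the crux, but the target of the design should be forcing a monotone geometric drift (nesting), not agreement of chord orientations.
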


Why do we see the result as evidence? 
An algorithm solving \boxicity on graphs of bounded treewidth 
(or even stronger, of bounded bandwidth) is likely to exploit the local structure 
of the graph in order to make dynamic programming work. Yet, Theorem~\ref{bandthm}
implies that this locality may be lost in some dimensions, which constitutes a serious obstacle
for any dynamic programming based approach. We discuss this in more detail in Section~\ref{sec:bandthm}. 

\medskip
Figure~\ref{fig:paramsmap} summarizes previously known
parameterized complexity results on boxicity along with those obtained in this article. 
Adiga et al.~\cite{adiga10-b} initiated this line of research when 
they parameterized \boxicity by the minimal size $k$ of a \emph{vertex cover}
in order to give an~$2^{O(2^k k^2)} \cdot n$-time algorithm, where $n$ 
denotes the number of vertices of the input graph, as usual. 
Adiga et al.\ also described an approximation algorithm that, in time $2^{O(k^2 \log k)}\cdot n$,
returns a box representation of at most $\boxOp(G) + 1$ dimensions. 
Both results were extended by Ganian~\cite{ganian11} to the less restrictive parameter \emph{twin cover}.
Our Theorem~\ref{dtcthm} includes Ganian's.

\begin{figure}[t]
\centering
\newcommand{\tworows}[2]{\begin{tabular}{c}{#1}\\{#2}\end{tabular}}
\newcommand{\threerows}[3]{\begin{tabular}{c}{#1}\\{#2}\\{#3}\end{tabular}}
\begin{tikzpicture}[->,>=latex]
\tikzstyle{pstyle} = []
\tikzstyle{fpt} = [draw, rectangle, rounded corners, minimum height=3em]
\tikzstyle{approx} = [draw, dashed, rectangle, rounded corners, minimum height=3em]
\tikzstyle{hypohard} = []

\node[fpt] (vc) at (2.5,4) {Vertex Cover~\cite{adiga10-b}};
\node[approx] (bw) at (5.5,4) {Bandwidth};
\node[approx] (mln) at (8.5,4) {\tworows{Maximum}{leaf number~\cite{adiga10-b}}};

\node[fpt] (tc) at (2.5,2.5) {Twin cover~\cite{ganian11}};
\node[approx] (pw) at (5.5,2.5) {\tworows{Pathwidth}{(Th.~\ref{pwalgothm})}};
\node[approx] (fvs) at (8.5,2.5) {\tworows{Feedback}{vertex set~\cite{adiga10-b}}};

\node[fpt] (cvd) at (2.5,1) {\tworows{Cluster vertex}{deletion~(Th.~\ref{dtcthm})} };
\node[hypohard] (tw) at (7,1) {Treewidth};

\node[hypohard] (cw) at (5,-0.5) {Cliquewidth};

\draw[thick] (vc) edge (tc) edge (pw);
\draw[thick] (bw) edge (pw);
\draw[thick,<-] (cvd) edge (tc);

\draw[thick] (mln) edge (fvs) edge (pw);
\draw[thick,<-] (tw) edge (fvs) edge (pw);

\draw[thick,<-] (cw) edge (cvd) edge (tw);
\end{tikzpicture}
\caption{Navigation map through our parameterized complexity results for \boxicity. An arc from a parameter~$k_2$ to a parameter~$k_1$ means that there exists some function~$h$ such that~$k_1 \leq h(k_2)$. A rectangle means fixed-parameter tractability for this parameter and a dashed rectangle means an approximation algorithm with running time~$f(k)\cdot n^{O(1)}$ is known.}
\label{fig:paramsmap}
\end{figure}

Other structural parameters that were considered by Adiga~et~al.\ for parameterized approximation algorithms
are the size of a \emph{feedback vertex set} -- the minimum number of vertices 
that need to be deleted to obtain a forest -- and \emph{maximum leaf number} -- the maximum number of leaves in a spanning tree of 
the graph. They proved that 
finding a $d$-box representation of a graph~$G$ such that~$d \leq 2\boxOp(G) + 2$ (resp.~$d \leq \boxOp(G) + 2$) can be done in~$f(k)\cdot |V(G)|^{O(1)}$ time (resp.~$2^{O(k^3 \log k)}\cdot |V(G)|$ time) where~$k$ is the size of a feedback vertex set (resp. maximum leaf number).
In~\cite{adiga12}, Adiga, Babu, and Chandran generalized these approximation algorithms 
to parameters of the type
``distance to~$\mathcal{C}$'', where $\mathcal C$ is any graph class of bounded boxicity.
More precisely, the parameter measures the minimum number of vertices
 whose deletion results in a graph that belongs~$\mathcal{C}$.

The algorithm of Theorem~\ref{pwalgothm} generalizes the approximation algorithm 
for the parameter vertex cover number, 
and improves the guarantee bound of the approximation algorithm for the 
parameter maximum leaf number.

\medskip

There is merit in studying approximation algorithms from a parameterized perspective:
not only is \boxicity NP-complete, but the associated minimization problem
cannot be approximated in polynomial time  
within a factor of $n^{\frac{1}{2} - \varepsilon}$ for any~$\varepsilon > 0$ 
even when the input is restricted to bipartite, co-bipartite or split graphs (provided~NP$\neq$ZPP).
This is a result due to Adiga, Bhowmick and Chandran~\cite{adiga10}. 
There is, however, an approximation algorithm with factor $o(n)$ for general graphs; 
see Adiga~et~al.~\cite{adiga12}.

\medskip

While Roberts~\cite{roberts69} was the first to study the boxicity parameter, 
he was hardly the first to consider box representations of graphs. 
Already in 1948 Bielecki~\cite{Bie48} asked, here phrased in modern terminology, 
whether triangle-free graphs of 
boxicity $\leq 2$ had bounded chromatic number. This was answered affirmatively
by Asplund and Gr\"unbaum in~\cite{AG60}. Kostochka~\cite{Kos04}
treats this question in a much more general setting.

Following Roberts who proved that~$\boxOp(G) \leq \frac{n}{2}$,
other authors obtained bounds for  boxicity. 
Esperet~\cite{esperet09}, for instance, showed that~$\boxOp(G) \leq \Delta(G)^2 + 2$,
while Scheinerman~\cite{scheinerman84}  established that 
every outerplanar graph has boxicity at most two. 
This, in turn, was extended by Thomassen~\cite{thomassen86}, who showed that 
planar graphs have boxicity at most three.

\medskip

In the next section, we will give formal definitions of the necessary concepts
for this article. We prove our main results in Sections~\ref{sec:dtcthm}--\ref{sec:bandthm}.
Finally, we discuss the impact and limitations of our results in Section~\ref{sec:discussion},
where we also outline some future directions for research.

\section{Preliminaries}\label{sec:prelim}

\paragraph{Graph terminology.} We follow the notation of Diestel~\cite{diestelBook10},
where also all basic definitions concerning graphs may be found. 

Let $X$ be some finite set. With a slight abuse of notation, we consider a collection $I=([\ell_v,r_v])_{v\in X}$ of closed intervals in the real line to be an \emph{interval graph}: $I$ has vertex set $X$, and two of its vertices
$u$ and $v$ are adjacent if and only if the corresponding intervals $[\ell_u,r_u]$
and $[\ell_v,r_v]$ intersect. By perturbing the endpoints of the intervals we can 
ensure that no two intervals have a common endpoint, and that for every interval 
the left endpoint is distinct from the right endpoint. We always tacitly 
assume the intervals to be of that form.
Fig.~\ref{fig:forbidden} shows the family of forbidden subgraphs for the class of interval graphs.

The \emph{bandwidth} of a graph $G$, say with vertex set $V(G) = \{v_1,v_2,\ldots,v_n\}$, is the least number $k$ for which the vertices of $G$ can be labeled with distinct integers $\ell(v_i)$ such that $k = \max\{|\ell(v_i)-\ell(v_j)| : v_iv_j\in E\}$. 
Equivalently, it is the least integer $k$ for which the vertices of $G$ can be placed at distinct integer points on the real line such that the length of the longest edge is at most $k$.
We denote the bandwidth of a graph $G$ by $\bwOp(G)$.

We define a \emph{path decomposition} of a graph $G$ as a set~$\mathcal{W} = \{W_1, \ldots, W_t\}$ of subsets of $V(G)$ called \emph{bags} such that the following conditions are met.
\begin{enumerate}
\item $\bigcup_{i = 1}^t W_i = V(G)$.
\item For each $uv \in E(G)$, there is an~$i \in \{1,\ldots,t\}$ such that $u, v \in W_i$.
\item For each~${v \in V(G)}$, if~$v \in W_i \cap W_j$ for some~$i,j \in \{1,\ldots,t\}$, then~$v \in W_k$ with~$i \leq k \leq j$.
\end{enumerate}
The \emph{width} of a path decomposition is~$\max_{i} |W_i| - 1$.
The \emph{pathwidth}~$\pwOp(G)$ of a graph~$G$ is the minimum width over all possible path decompositions of~$G$.
Equivalently, $\pwOp(G)$ is the minimum size of the largest clique of any interval supergraph of $G$, minus 1.

     \begin{figure}[ht]
      \centering
      \includegraphics[scale=0.7]{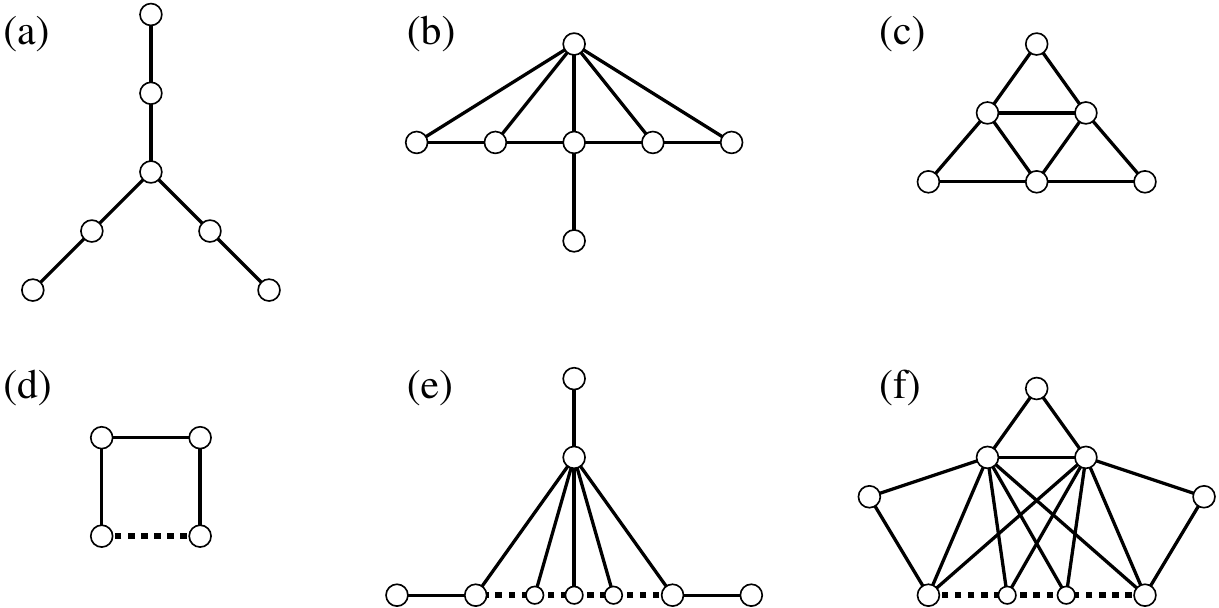}
      \caption{Forbidden induced subgraphs for interval graphs; the dashed
paths may have any length $\geq 1$}\label{fig:forbidden}
      \end{figure}

The \emph{treewidth} of a graph $G$, denoted $\twOp(G)$, is the minimum size of the largest clique of any chordal supergraph of $G$, minus 1.
For the purpose of our paper it is important to remark that for every graph~$G$
we have~$\twOp(G) \leq \pwOp(G) \leq \bwOp(G)$.


\paragraph{Parameterized complexity.} 
A decision problem parameterized by a problem-specific parameter~$k$ is called \emph{fixed-parameter tractable} 
if there exists an algorithm that solves it in time~$f(k) \cdot n^{O(1)}$, where~$n$ is the instance size. 
The function~$f$ is typically super-polynomial and only depends on~$k$. 
One of the main tools to design such algorithms is the
\emph{kernelization} technique. 
A kernelization algorithm transforms in
polynomial time an instance~$I$ of a given problem parameterized
by~$k$ into an equivalent instance~$I'$ of the same problem
parameterized by~$k' \leq k$ such that the size of~$I'$ is bounded by~$g(k)$ for some
computable function~$g$. The instance~$I'$ is called a
\textit{kernel} of size~$g(k)$. The following folklore result is well known. 
\begin{theorem}
A parameterized problem~$P$ is fixed-parameter tractable if and only if~$P$ has a kernel.
\end{theorem}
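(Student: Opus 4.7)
The plan is to prove the two directions of the equivalence separately, observing that essentially all the content lives in the direction "FPT implies kernel". For the easy direction, I would assume a kernelization that produces, in polynomial time $p(|I|)$, an equivalent instance $I'$ of size at most $g(k)$. Combining this with any decision procedure for $P$ (whose existence we tacitly assume, since otherwise neither side of the equivalence is meaningful) yields an FPT algorithm: first compute the kernel, then decide $I'$ by brute force in time $h(g(k))$, a quantity depending only on $k$. The total running time $p(|I|) + h(g(k))$ has the required form $f(k) + |I|^{O(1)}$.

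For the converse, the standard trick is a dichotomy between the instance size $n = |I|$ and the parameter-dependent blow-up of the FPT algorithm. Assuming an algorithm solving $P$ in time $f(k) \cdot n^c$, on input $(I,k)$ I would simulate the FPT algorithm for exactly $n^{c+1}$ steps. If it halts within that budget, the instance has been decided in polynomial time, so I output a fixed constant-size yes- or no-instance according to the answer. If it does not halt, then $f(k) > n$ necessarily, so $|I| < f(k)$, and I output $(I,k)$ unchanged; this already is a kernel of size $f(k)$. In both cases the output is an equivalent instance of $P$ whose size is bounded by a function of $k$ alone, and the preprocessing runs in polynomial time.

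The only subtle point, and the closest thing to an obstacle, is the need in the converse direction for constant-size trivial instances representing both possible answers: the kernelization must output an instance of $P$ itself, so one such instance of each type must exist. This is either literally the case, or may be assumed without loss of generality, since if $P$ has only yes-instances (or only no-instances) it is a constant function and trivially admits a kernel. Once this bookkeeping is in place, the proof is simply unwinding the definitions of kernelization and fixed-parameter tractability, together with the observation that a polynomial running time of the form $f(k)\cdot n^c$ is polynomial in $n$ precisely when $f(k) \le n$.
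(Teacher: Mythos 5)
Your proof is correct and is precisely the standard folklore argument; the paper itself states this theorem without proof, citing it as a well-known folklore result. Both directions are handled properly: you correctly flag the need for decidability of $P$ in the ``kernel implies FPT'' direction, and in the converse you correctly use the clock-the-algorithm-for-$n^{c+1}$-steps dichotomy together with the existence of constant-size trivial yes- and no-instances. Nothing further is needed.
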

In the remainder of this paper, the kernel size is expressed in terms of the number of vertices.

For more background on parameterized complexity the reader is referred to
Downey and Fellows~\cite{DowneyF13}. 

\paragraph{Problem definition.} We call an \emph{axis-aligned $d$-dimensional box} (or \emph{$d$-box}) a cartesian product of~$d$ closed real intervals. A \emph{$d$-box representation} of a graph~$G$ is a mapping that maps every vertex~$v \in V(G)$ to a $d$-box~$B_v$ such that two vertices~$u,v \in V(G)$ are adjacent if and only if their associated boxes have a non-empty intersection. The \emph{boxicity} of~$G$, denoted by $\boxOp(G)$, is the minimum integer~$d$ such that~$G$ admits a $d$-box representation. We consider the following problem.

\problemdec{\boxicity}{A graph~$G$ and an integer~$d$.}{Is $\boxOp(G) \leq d$?}

Given a~$d$-box representation of~$G$, we denote by~$[\ell_i(v),r_i(v)]$ the interval representing~$v$ in the $i$-th dimension.

Throughout the article, 
we make frequent use of the reformulation of boxicity in terms of interval graphs: 

\begin{theorem}[Roberts~\cite{roberts69}]
The boxicity of a graph $G$ is equal to
the smallest integer $d$ so that $G$ can be  expressed 
as the intersection of $d$ interval graphs.
\end{theorem}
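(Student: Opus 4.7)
The plan is to exhibit an explicit family $(G_k)_{k\ge 1}$ with $|V(G_k)|=\Theta(k)$, $\boxOp(G_k)=2$, $\bwOp(G_k)=O(1)$, and then to prove that for any pair of interval graphs $I_1,I_2$ with $I_1\cap I_2=G_k$, at least one of them contains a clique of size $\Omega(k)$. Since an interval graph has treewidth equal to its clique number minus one, this immediately yields the stated $\Omega(|V(G_k)|)$ lower bound on the treewidth.

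For the construction, I would take a ``chain of rigid 2-box gadgets''. Each gadget $H$ is a small graph of boxicity $2$ whose every $2$-box representation is essentially unique up to reflection, and consecutive gadgets share one or two vertices so that a consistent geometric orientation is forced to propagate along the whole chain. A natural candidate is a small configuration built around an induced $C_4$ with auxiliary vertices that rule out alternative interval-graph decompositions; the two explicit interval graphs witnessing $\boxOp(G_k)\le 2$ come directly from the coordinate projections, and since each gadget has constant size and is attached only locally, the vertices of $G_k$ can be laid out on the line so that all edges have constant length, giving $\bwOp(G_k)=O(1)$.

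For the lower bound, fix any pair of interval graphs $I_1,I_2$ with $I_1\cap I_2=G_k$ and consider the left-endpoint orders $\sigma_1,\sigma_2$ they induce on $V(G_k)$. Because each gadget contains an induced $C_4$ (and other obstructions from Figure~\ref{fig:forbidden}), it cannot be an induced subgraph of an interval graph, so at least one of $I_1,I_2$ must contain a specific ``diagonal'' chord of that $C_4$; this yields, at each of the $k$ gadgets, a binary choice of which side carries the extra edge. Propagating these choices through the chain, one then shows by a pigeonhole argument (or, if the choices lead to monotone behavior on an interval axis, by Erd\H{o}s--Szekeres) that one of $I_1,I_2$ must have $\Omega(k)$ vertices whose intervals all share a common point---producing the required clique and hence the $\Omega(k)$ treewidth.

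The main obstacle is the gadget design: it has to be simultaneously rigid (so that every $2$-box representation respects the intended orientation, allowing the forcing to propagate), realizable (so that $\boxOp(G_k)\le 2$ actually holds), and local (so that $\bwOp(G_k)$ remains constant). Balancing these three competing requirements is where the technical heart of the proof lies, and it will likely rely on a careful case analysis based on the forbidden-subgraph characterization of interval graphs recalled in Figure~\ref{fig:forbidden}.
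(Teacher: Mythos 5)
Your proposal does not address the statement at hand. The statement is Roberts' characterization of boxicity: $\boxOp(G)$ equals the least $d$ such that $G$ is the intersection of $d$ interval graphs. What you have sketched instead is a construction of a family of graphs of boxicity $2$ and bounded bandwidth for which any two interval graphs intersecting to the graph force one of them to have large treewidth --- that is, you have outlined Theorem~\ref{bandthm} (proved in the paper via Lemma~\ref{spirallem} and the spiral gadget), not the Roberts equivalence. None of the machinery you invoke (rigid gadgets, propagation of orientation, Erd\H{o}s--Szekeres) is relevant to the statement you were asked to prove.

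The actual statement is an elementary equivalence and its proof is a two-line correspondence. Given a $d$-box representation $v\mapsto \cart_{i=1}^d[\ell_i(v),r_i(v)]$, the projection onto the $i$-th coordinate defines an interval graph $I_i$ on $V(G)$; two axis-aligned boxes intersect if and only if their projections intersect in every coordinate, so $G=\bigcap_{i=1}^d I_i$. Conversely, given interval graphs $I_1,\ldots,I_d$ with $G=\bigcap_{i=1}^d I_i$, assigning to each vertex the product of its $d$ intervals yields a $d$-box representation of $G$ by the same observation. Hence the two minima coincide. You should supply this argument (or simply the citation to Roberts, as the paper does) rather than the construction you described, which belongs to a different theorem.
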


\section{Proof of Theorem~\ref{dtcthm}}\label{sec:dtcthm}

Theorem~\ref{dtcthm} follows immediately from the following lemma:

\begin{lemma}\label{dtckernel}
\boxicity admits a kernel of at most~$k^{2^{O(k)}}$ vertices, 
where~$k$ is the cluster vertex deletion number of the input graph.
\end{lemma}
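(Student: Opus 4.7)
The plan is kernelization. First, compute a cluster vertex deletion set $S \subseteq V(G)$ with $|S| \leq k$ using a known fixed-parameter algorithm (for instance by branching on induced $P_3$s, one of whose vertices must enter the deletion set). The graph $G - S$ is then a disjoint union of cliques $C_1,\dots,C_m$, and each vertex $v \in V(G)\setminus S$ receives a \emph{type} $T(v) := N(v)\cap S$; there are at most $2^k$ possible types.

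The first reduction rule collapses true twins within a cluster. If $u,u' \in C_i$ share a type, then they have the same closed neighborhood in $G$. Any $d$-box representation of $G - u$ extends to a $d$-box representation of $G$ by assigning $u$ the box of $u'$, and the converse is trivial, so $u$ can safely be deleted. After exhaustive application, each cluster contains at most one vertex per type and hence at most $2^k$ vertices.

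The second reduction rule bounds the number of clusters sharing the same \emph{signature}, defined (after Rule~1) as the set of types appearing in the cluster. Two same-signature clusters $C_i$ and $C_j$ are isomorphic as cliques with isomorphic attachments to $S$, via an isomorphism that fixes $S$ pointwise. I would show that no signature is realised by more than $g(k) = 2^{O(k)}$ clusters: whenever more exist, one cluster can be deleted without changing the boxicity. Combined with at most $2^{2^k}$ signatures and at most $2^k$ vertices per cluster, this yields a kernel of $k + 2^{2^k}\cdot g(k) \cdot 2^k = k^{2^{O(k)}}$ vertices.

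The principal obstacle is the correctness of the second rule. Unlike twins inside a clique, two same-signature clusters are distinct components of $G - S$ whose box families must be pairwise disjoint, so no pointwise identification works. My plan is representation-theoretic: fix an optimal $d$-box representation and, for a given signature~$\sigma$, examine the box configurations contributed by the clusters realising~$\sigma$. Each such cluster contributes $|\sigma|\leq 2^k$ boxes in $d$ dimensions, each constrained only by its prescribed intersections with the $S$-boxes. Projecting onto every axis and applying a pigeonhole/Ramsey-style argument on the positions of these projections relative to the $S$-boxes should yield two clusters whose configurations are so alike that one can be removed while keeping a valid $d$-box representation of the reduced graph as witness for $\boxOp(G) \leq d$. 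Formalising this geometric argument while ensuring that removal does not disturb the other clusters or~$S$ is the technical heart of the proof.
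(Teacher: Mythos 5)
Your overall architecture matches the paper's: compute a modulator, collapse same-type vertices within a cluster (true twins), bound each cluster by $2^k$ vertices, and then cap the number of clusters per signature (the paper's equivalence classes of clusters). The first rule is fine. But the second rule, which you correctly identify as the technical heart, is where your sketch has a genuine gap, and the part you do sketch points in the wrong direction. The nontrivial inequality is $\boxOp(G)\leq\boxOp(G-C^*)$: one must start from an optimal box representation of the \emph{smaller} graph $G-C^*$ and re-insert the deleted cluster, not start from a representation of $G$ and delete a cluster --- the latter is the trivial direction and needs no ``alikeness'' of clusters at all. The pigeonhole must therefore be run inside a representation of $G-C^*$, producing two surviving same-signature clusters $C,C'$ whose boxes occupy the same grid cells relative to the modulator's boxes; the deleted cluster is then realised as a copy of $C$'s boxes shifted by a small vector pointing from $C$ towards $C'$.

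Two further ingredients are missing, and they are not cosmetic. First, a pigeonhole over ``positions relative to the $S$-boxes'' only yields a bound $g(k)$ if the number of dimensions $d$ is itself bounded in terms of $k$; a priori $d=\boxOp(G-C^*)$ could grow with $n$. The paper gets $d\leq 2^k+k+1$ by observing that after the twin reduction the graph has pathwidth at most $k+2^k-1$ and invoking $\boxOp\leq\twOp+2$; without this step the number of position classes is not a function of $k$ and no kernel bound follows. (Consequently $g(k)$ comes out doubly exponential rather than $2^{O(k)}$, though the final $k^{2^{O(k)}}$ bound absorbs this.) Second, ``two clusters in the same position'' is not yet enough to insert a copy: the new boxes must meet the modulator's boxes correctly \emph{and} avoid all boxes of the cluster $C$ they were copied from. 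The paper first restricts attention to \emph{thin} clusters (whose boxes contain no grid point; a counting argument shows most clusters are thin) and normalises them so that any box corner sharing a grid interval with the cluster's Helly point is moved onto that Helly point; only after this normalisation does the $\epsilon$-shift towards $C'$ provably separate the inserted copy from $C$ in some dimension for every non-adjacent pair. Your sketch acknowledges this difficulty but supplies no mechanism for it, and without one the safety of the rule is unproven. A minor additional point: computing $S$ by branching on induced $P_3$s takes $3^k\cdot n^{O(1)}$ time, which is not a polynomial-time kernelization; use the polynomial $3$-approximation (delete all three vertices of each found $P_3$) as the paper does.
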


In the course of this section, we present a sequence of lemmas in order to prepare the proof of our main lemma above.

Two adjacent vertices $u,v$ in a graph $G$ are \emph{true twins} if $u$ 
and $v$ have the same neighbourhoods in $G-\{u,v\}$. 
As observed by Ganian~\cite{ganian11}, deleting one of two true twins does not 
change the boxicity.

\begin{lemma}\label{notwinslem}
Let $u,v$ be true twins of a graph $G$. Then $\boxOp(G)=\boxOp(G-u)$.
\end{lemma}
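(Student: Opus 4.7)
The plan is to prove the two inequalities $\boxOp(G-u)\le\boxOp(G)$ and $\boxOp(G)\le\boxOp(G-u)$ separately, and both should fall out of the definition of a box representation together with the true twin property $N(u)\setminus\{v\}=N(v)\setminus\{u\}$.

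For the first inequality, I would observe that any $d$-box representation $(B_w)_{w\in V(G)}$ of $G$ restricts to a $d$-box representation of the induced subgraph $G-u$ simply by forgetting $B_u$; adjacency in $G-u$ agrees with adjacency in $G$ on $V(G)\setminus\{u\}$, so the intersection pattern of the remaining boxes is exactly what is needed. This gives $\boxOp(G-u)\le\boxOp(G)$ with no use of the twin hypothesis.

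For the reverse inequality, the key idea is to copy: starting from a $d$-box representation $(B_w)_{w\in V(G)\setminus\{u\}}$ of $G-u$ with $d=\boxOp(G-u)$, I would assign to $u$ the same box as $v$, i.e.\ set $B_u:=B_v$, and check that this yields a valid representation of $G$. Two verifications are needed: first, $B_u\cap B_v=B_v\ne\emptyset$, which matches the fact that $u$ and $v$ are adjacent in $G$; second, for every other vertex $w\in V(G)\setminus\{u,v\}$, we have $B_u\cap B_w\ne\emptyset \iff B_v\cap B_w\ne\emptyset \iff vw\in E(G-u) \iff vw\in E(G) \iff uw\in E(G)$, where the final equivalence is exactly the true twin condition $N(u)\setminus\{v\}=N(v)\setminus\{u\}$.

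There is essentially no real obstacle here; the statement is a direct unpacking of definitions once one has the right construction (duplicating the box of $v$). The only care required is a clean bookkeeping of which pairs of vertices need to be checked and which equivalence delivers each one, in particular noting that the equivalence for $w\in V(G)\setminus\{u,v\}$ relies on the twin hypothesis while the equivalence for the pair $\{u,v\}$ relies on $u$ and $v$ being adjacent (i.e.\ \emph{true} twins, not merely false twins).
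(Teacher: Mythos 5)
Your proof is correct and is exactly the standard argument: the paper itself gives no proof of this lemma, merely attributing the observation to Ganian, and the intended justification is precisely your construction of duplicating the box of $v$ for $u$ (using adjacency of the true twins for the pair $\{u,v\}$ and the equal neighbourhoods for all other pairs). Nothing is missing.
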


We remark, without proof, that there is also a reduction for
false twins (those that are non-adjacent): if there are at least three of them, 
then one may be deleted without changing the boxicity. We will not, however, make use of this 
observation.

Recall that a \emph{cluster graph} is the disjoint union of complete graphs, called \emph{clusters}.
In what follows, we implicitly identify a cluster with its vertex set.

Let $G-X$ be a cluster graph for some $X\subseteq V(G)$. 
We call two clusters $C,C'$ of $G-X$ \emph{equivalent}
if there is a bijection $C\to C'$, $v\mapsto v'$, such that 
$N_G(v)\cap X=N_G(v')\cap X$. 
Observe that, if $G-X$ has no true twins, then two clusters $C$ and $C'$ are equivalent
if and only if $\{N_G(u)\cap X:u\in C\}=\{N_G(v)\cap X:v\in C'\}$.
 
\begin{lemma}\label{smallclusters}
Let $G$ be a graph without true twins, and let $X$ be a set of $k$ vertices so that $G-X$
is a cluster graph. Then every cluster in $G-X$ contains at most~$2^k$ vertices.
\end{lemma}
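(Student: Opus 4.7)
The plan is to fix an arbitrary cluster $C$ of $G-X$ and exhibit an injection from $C$ into the power set $2^X$, which immediately gives $|C|\le 2^k$. The map I will use is the ``trace on $X$'' map $\phi\colon C\to 2^X$ defined by $\phi(v)=N_G(v)\cap X$.

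The key step is to verify injectivity of $\phi$, and this is where the ``no true twins'' hypothesis does the work. First I would note that for any two distinct $u,v\in C$, their neighbourhoods in $V(G)\setminus(X\cup\{u,v\})$ are already forced to coincide: both $u$ and $v$ lie in the clique $C$, so they are adjacent to every other vertex of $C$ and (since $G-X$ is a cluster graph, meaning different clusters have no edges between them) non-adjacent to every vertex in any other cluster of $G-X$. Since $u$ and $v$ are themselves adjacent (again because $C$ is a clique), the only place where $N_G(u)$ and $N_G(v)$ can differ is on $X$. Hence if $\phi(u)=\phi(v)$, then $u$ and $v$ have identical neighbourhoods in $G-\{u,v\}$, contradicting the assumption that $G$ has no true twins.

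With injectivity established, the bound follows from $|C|=|\phi(C)|\le|2^X|=2^k$. I do not expect any real obstacle: the statement is essentially a direct consequence of the observation made just before the lemma (that in the absence of true twins, two clusters are equivalent iff the \emph{sets} of $X$-neighbourhoods along them coincide), applied inside a single cluster rather than between two clusters. The only thing requiring care is the reminder that edges between distinct clusters of $G-X$ are absent, which is what makes the symmetric difference of $N_G(u)$ and $N_G(v)$ sit entirely inside $X$.
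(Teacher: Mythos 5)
Your proof is correct and follows the same route as the paper's: bound the number of possible traces $N_G(v)\cap X$ by $2^k$ and use the absence of true twins to show the trace map is injective on each cluster. You merely spell out in more detail why equal traces would force two cluster vertices to be true twins, which the paper leaves implicit.
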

\begin{proof}
Consider a cluster $C$ of $G$. 
Then the number of sets $N_G(v)\cap X$, $v\in C$, is bounded 
by $2^k$. As $G$ has no true twins, no two vertices in $C$ may
have the same neighbourhood in $X$, which implies that $|C|\leq 2^k$.
\end{proof}

We also need the following result.

\begin{theorem}[Chandra and Sivadasan~\cite{chandran07}]\label{boxtwthm}
It holds that $\boxOp(G) \leq \twOp(G)+2$ for any graph~$G$.
\end{theorem}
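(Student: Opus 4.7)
The plan is to exhibit $\twOp(G)+2$ interval supergraphs of $G$ whose intersection is $G$, and so conclude via Roberts' reformulation. Set $k:=\twOp(G)$ and fix an optimal tree decomposition $(T,\{W_t\}_{t\in V(T)})$ of width $k$. Let $H$ be the chordal supergraph of $G$ obtained by completing each bag into a clique; then $G\subseteq H$, $\omega(H)\le k+1$, and the subtrees $T_v:=\{t\in V(T):v\in W_t\}$ form a Gavril representation of $H$, so $uv\in E(H)$ iff $T_u\cap T_v\neq\emptyset$.

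The first main step is to write $H$ as the intersection of $k+1$ interval graphs $I_1,\ldots,I_{k+1}$. Because $H$ is chordal with $\omega(H)\le k+1$, it is perfectly $(k+1)$-colorable via a perfect elimination ordering; fix such a proper coloring $\phi:V(G)\to\{1,\ldots,k+1\}$. I would root $T$ and linearly order its nodes by a DFS traversal $\sigma$. For each color $i$, I would construct $I_i$ by giving each $v\in\phi^{-1}(i)$ the interval corresponding to the $\sigma$-positions of $T_v$, and giving every other vertex the full line (so it is adjacent to everything in $I_i$). The key claim is that for each non-edge $uv$ of $H$, the color class of the vertex that is eliminated first in the perfect elimination ordering witnesses the non-edge in its $I_i$; combined with $G\subseteq H\subseteq I_i$ for all $i$, this yields $\bigcap_i I_i = H$.

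The second main step is to add a single interval supergraph $I^\ast$ of $G$ that carves off the fill-in edges, i.e.\ such that $H\cap I^\ast = G$. Every edge of $H\setminus E(G)$ has its endpoints sharing some bag of the decomposition, so within each bag $W_t$ one has a partition of pairs into true edges of $G$ and fill-in pairs. I would choose, for each $t$, a linear order on $W_t$ compatible with a global order obtained from an Euler tour of $T$, and represent each vertex $v$ by the interval of its positions across all bags it belongs to. With a careful choice this produces an interval graph that is a supergraph of $G$ and that separates at least one representative of each fill-in pair inside each bag in which that pair occurs.

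The hardest part of the plan is the second step: showing that a single interval graph can simultaneously destroy all fill-in edges of $H\setminus E(G)$ while preserving every true edge of $G$. Fill-in edges in a bag can be numerous and overlapping, and the global order used to build $I^\ast$ must respect both the bag structure across the tree and the adjacency pattern of $G$ inside each bag. If this uniform carving proves too ambitious, the fallback is to relax the construction and absorb part of the correction into a redistribution of the $k+1$ chordal interval graphs, still reaching the overall bound of $k+2$.
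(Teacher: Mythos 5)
The paper does not prove this statement at all---it is imported from Chandran and Sivadasan---so there is no internal proof to compare against; what follows is a review of your argument on its own terms. The fatal gap is in your \emph{first} step, not the second one you flag as hardest. In your graph $I_i$ every vertex outside the colour class $\phi^{-1}(i)$ receives the whole real line and is therefore adjacent in $I_i$ to \emph{every} vertex, including every vertex of colour $i$. Hence the only non-edges of $I_i$ are between two vertices of colour $i$, and the only non-edges of $\bigcap_{i=1}^{k+1} I_i$ are monochromatic pairs. A non-edge $uv$ of $H$ with $\phi(u)\neq\phi(v)$---the generic case---is an edge of every $I_i$: in $I_{\phi(u)}$ the vertex $v$ carries the full line and meets $u$'s interval, in $I_{\phi(v)}$ the vertex $u$ does, and in every other $I_j$ both do. So $\bigcap_i I_i$ is in general a proper supergraph of $H$, and your ``key claim'' about the colour class of the first-eliminated endpoint is false as stated; repairing it by separating every pair of colour classes would cost on the order of $\binom{k+1}{2}$ interval graphs rather than $k+1$. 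A smaller but real further problem: a subtree $T_v$ need not occupy a contiguous block of DFS positions (take a star and the subtree consisting of the centre and two non-consecutive leaves), so the ``interval of the $\sigma$-positions of $T_v$'', read as a convex hull, can meet the hull of a vertex-disjoint subtree; thus even the monochromatic non-edges of $H$ are not reliably realised.

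Your second step is also unsupported. You need a single interval supergraph $I^*$ of $G$ containing \emph{no} fill-in edge of $H$, i.e.\ an interval graph sandwiched between $G$ and the complement of the set of fill-in pairs. Nothing in the proposal shows such a graph exists: any interval supergraph of $G$ has a clique of size $\pwOp(G)+1$, which for $\pwOp(G)\gg\twOp(G)$ forces many non-edges of $G$ to become edges of $I^*$, and you would have to argue that the decomposition can be chosen so that none of these shares a bag. The fallback of ``redistributing the correction'' over the other $k+1$ graphs is not a construction. The actual argument of Chandran and Sivadasan is substantially more delicate and does not factor through writing the chordal completion as an intersection of $\omega(H)$ interval graphs; as it stands, neither half of your plan is established.
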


In particular, $\boxOp(G) \leq \pwOp(G)+2$ for any graph~$G$. 

\begin{lemma}\label{clusterdellem}
Let $G$ be a graph without true twins, and let $X$ be a set of $k$ vertices so that $G-X$ is a cluster graph.
Moreover, let $\mathcal D$ be an equivalence class of clusters with
$|\mathcal D|\geq 2(2k+2)^{2^{k+1}(2^k+k+1)}$.
For every $C^*\in \mathcal D$, $\boxOp(G)=\boxOp(G-C^*)$.
\end{lemma}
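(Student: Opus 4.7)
The direction $\boxOp(G - C^*) \le \boxOp(G)$ is immediate: any $d$-box representation of $G$ restricts to one of $G - C^*$ by forgetting the boxes assigned to $C^*$. So the content of the lemma is the reverse inequality. Set $d = \boxOp(G - C^*)$ and fix interval graphs $I_1, \dots, I_d$ with $\bigcap_{j=1}^d I_j = G - C^*$, perturbed so that all interval endpoints in each $I_j$ are distinct. As a preliminary step I would bound $d$ in terms of $k$: each cluster of $G - C^* - X$ has at most $2^k$ vertices by Lemma~\ref{smallclusters}, hence $\pwOp(G - C^* - X) \le 2^k - 1$; adding $X$ to every bag of an optimal path decomposition gives $\pwOp(G - C^*) \le 2^k + k - 1$, and Theorem~\ref{boxtwthm} then yields $d \le 2^k + k + 1$.

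My plan is to assign a type to each cluster in $\mathcal D$ and argue by pigeonhole. In dimension $j$, the $2k$ endpoints of the $X$-intervals cut $\mathbb R$ into $2k+1$ regions. Fix a reference cluster $C_0 \in \mathcal D$ and, via the equivalence bijections, identify the vertices of every $C \in \mathcal D$ with those of $C_0$. For $C \in \mathcal D$ let $\tau(C)$ record, for each $v \in C_0$ and each dimension $j$, the ordered pair of regions containing the two endpoints of the $I_j$-interval assigned to the $C$-copy of $v$. The number of possible values of $\tau(C)$ is at most $((2k+1)^2)^{|C_0|\cdot d} \le (2k+2)^{2^{k+1}(2^k+k+1)}$, so the assumption on $|\mathcal D|$ together with pigeonhole gives two distinct clusters $C_1, C_2 \in \mathcal D \setminus \{C^*\}$ with $\tau(C_1) = \tau(C_2)$.

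It remains to extend the representation to $C^*$. For each $v \in C^*$ and each dimension $j$ I would place a new interval whose two endpoints lie in the same regions as those of the corresponding vertex of $C_1$ (equivalently of $C_2$). Matching the type automatically produces the correct adjacencies between $C^*$ and $X$, because the region-type determines exactly which $X$-intervals are met, and the equivalence of $C^*$ with $C_1$ identifies these with $N(\cdot) \cap X$. Engineering a common intersection point of the new intervals in each dimension (as $C_1$'s intervals already have, by cliqueness) gives the required internal clique in $C^*$. The subtle condition is dimension-wise disjointness of $C^*$ from every other cluster of $G - X$, and here the existence of the second cluster $C_2$ is decisive: for each $v \in C_0$, the two copies of $v$ in $C_1$ and in $C_2$ are non-adjacent with identical type, which forces them to be disjoint in some dimension and, because the type coincides, to lie inside a single $X$-region in that dimension, leaving a free sub-slot in which I can put the $C^*$-copy of $v$. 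Disjointness of $C^*$ from any other cluster $C'$ is then inherited from $C_1$: in a dimension witnessing the $C_1$-$C'$ disjointness I can take the intervals of $C^*$ arbitrarily close to those of $C_1$, so $C^*$ and $C'$ remain disjoint there.

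The main obstacle will be this last step: one has to specify the new intervals coherently across all $d$ dimensions and all $\le 2^k$ vertices of $C^*$, and verify simultaneously the three conditions (correct adjacency with $X$, internal cliqueness, and dimension-wise disjointness from every cluster distinct from $C^*$). The same-type hypothesis on $C_1$ and $C_2$ is tailored precisely to rigidify their local structure enough so that such a coherent placement is possible, and I expect the bookkeeping there to occupy most of the technical work.
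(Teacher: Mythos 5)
Your overall strategy --- bound $d$ by $2^k+k+1$, discretize each dimension by the endpoints of the $X$-intervals, pigeonhole the clusters of $\mathcal D$ by their discrete type to find two identically realized clusters, and then insert $C^*$ as a slightly perturbed copy of one of them --- is the same as the paper's. The gap sits exactly at the step you defer as ``the main obstacle'': keeping the inserted copy disjoint from the cluster it is copied from. Your per-vertex ``free sub-slot'' argument only treats the pair consisting of the $C^*$-copy and the $C_1$-copy of the \emph{same} underlying vertex $v$; you must also separate the $C^*$-copy of $v$ from the $C_1$-copy of every \emph{other} vertex $w$ of that cluster, and for $v\neq w$ the two boxes do not share a type, so your observation that ``same type plus disjointness forces both intervals into a single region'' no longer applies. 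Moreover, the sub-slot dimension depends on $v$, while your argument for disjointness from third clusters requires the new intervals to be arbitrarily close to those of $C_1$ in every dimension; you never reconcile these two demands, nor do you verify that the resulting per-vertex displacements still admit a common intersection point for $C^*$ (which forces the displacements to be chosen coherently across the whole cluster, not vertex by vertex).

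The paper resolves this with two ingredients absent from your sketch. First, a normalization step (condition~\eqref{thinify}): after fixing a Helly point $p(C)$ in each thin cluster $C$, every box endpoint of $C$ lying in the same grid interval as $p(C)$ in some dimension is moved to coincide with $p_i(C)$. Your type records only which region each endpoint lies in, not the relative order of two endpoints inside a common region --- and it is precisely this order that decides whether a small shift separates or merges two boxes; the normalization collapses all critical endpoints onto $p_i(C)$ so that the ambiguity disappears. Second, all boxes of $C^*$ are shifted by one and the same vector $\epsilon q$ with $q_i$ the sign of $p_i(C')-p_i(C)$: this single coherent direction simultaneously preserves the internal clique of $C^*$, keeps each box of $C^*$ in the same position as its $C$-counterpart (hence correct adjacency to $X$ and disjointness from third clusters), and, combined with the normalization, separates every box of $C^*$ from every box of $C$ via the dimension separating the corresponding boxes of $C$ and $C'$. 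Without these two ingredients the insertion cannot be completed as you describe, so the proposal is incomplete at its decisive point.
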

\begin{proof}
As deleting vertices may only decrease the boxicity, it suffices to prove that 
$\boxOp(G)\leq\boxOp(G-C^*)$.

Set $H=G-C^*$, $d=\boxOp(H)$, $k=|X|$ and $\mathcal C= \mathcal D \setminus \{C^*\}$.
We claim that
\begin{equation} \label{cl:box-bounded}
d=\boxOp(H) \leq 2^k + k + 1.
\end{equation}
Indeed, define a path decomposition with a bag~$W_C$ for every cluster $C$ of~$H-X$ such that~$W_C = X\cup C$. This gives a path decomposition of $H$ with width at most $k+2^k-1$, by~Lemma~\ref{smallclusters}. 
Theorem~\ref{boxtwthm} now implies~\eqref{cl:box-bounded}.

For the sake of simplicity, let us introduce the following notions.
Fix a $d$-box representation of $H$. The set of \emph{corners} of a box of a vertex is the cartesian product~$\cart_{i=1}^d \{\ell_i(v),r_i(v)\}$.
By rescaling every dimension (compare Lemma~\ref{gridifylem}), we can ensure
that every endpoint of an interval of a vertex in $X$ lies in $\{1,2,\ldots,2k\}$.
Thus every corner  of a box of $X$ lies in the grid $\{1,2,\ldots,2k\}^d$.
We may moreover assume that every other box of $H$ is contained in $[0,2k+1]^d$.
Points of $\{0,1,\ldots,2k+1\}^d$ we call \emph{grid points}, 
and any set $[z_1,z_1+1]\times\ldots\times[z_d,z_d+1]$, where $z_i\in\{0,\ldots ,2k\}$, 
is a \emph{grid cell}.
In each dimension~$i$ we say that the grid induces the \textit{grid intervals} $[0,1]$, $[1,2]$, \ldots, $[2k,2k+1]$.
A box of a vertex in $H-X$ is a \emph{cluster box}. 

By perturbing the boxes slightly we may always assume that
\begin{equation}\label{perturb}
\begin{minipage}[c]{0.8\textwidth}\em
if $s$ is a corner of a cluster box of a cluster $C$ of $H-X$, 
and if $t$ is a corner  of the box of any vertex $z\in V(H-C)$
then $s_i\neq t_i$ for all dimensions $i=1,\ldots, d$.
\end{minipage}\ignorespacesafterend 
\end{equation} 
Moreover, we may assume that any corner of a cluster box lies in the interior of a grid cell.
A cluster box that does not contain any grid point we call a \emph{thin box}.

      \begin{figure}[ht]
      \centering
      \input{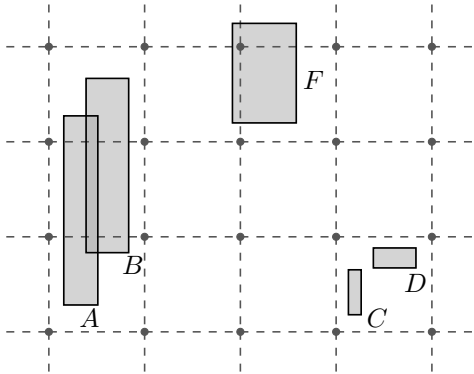}
      \caption{Boxes $A,B$ are in the same position, as are $C$ and $D$; $F$ is not thin.}\label{fig:thin}
      \end{figure}

We concentrate on \emph{thin} clusters, that is, clusters 
that consist of thin boxes only. 
We claim that
\begin{equation} \label{cl:bound-thin-boxes}
\text{\em 
at least $(2k+2)^{2^{k+1}(2^k+k+1)}$ clusters in
$\mathcal C$ are thin.}
\end{equation}
To prove this claim, observe that no grid point lies in a cluster box of two different clusters
as then two vertices in distinct clusters would be adjacent. Thus, there is at most 
one cluster per grid point so that one of its cluster boxes contains the grid point.
As,  by~\eqref{cl:box-bounded}, there are $(2k+2)^d\leq (2k+2)^{2^k+2k+1}$ 
grid points, it follows that $\mathcal C$ has 
at least $|\mathcal C|-(2k+2)^{2^k+2k+1}\geq (2k+2)^{2^{k+1}(2^k+k+1)}$ thin clusters.

\medskip
We say that two cluster boxes~$B$ and~$B'$ are in the \emph{same position} if every grid cell containing a corner of~$B$ also contains a corner of~$B'$ and vice versa (see~Figure~\ref{fig:thin}).
Note that if two vertices~$v, v' \in V(H)-X$ have boxes in the same position then $N_H(v) \cap X = N_H(v') \cap X$. (Here we use the fact that cluster boxes have their corner strictly in the interior of grid
cells.)

For every cluster $C\in\mathcal C$ we fix a point $p(C)$ that lies in every cluster box of~$C$:
such a point exists by the Helly property for boxes in $\mathbb R^d$.
We claim that, using this Helly point, 
we can modify our box representation of $H$ so that
\begin{equation}\label{thinify}
\begin{minipage}[c]{0.8\textwidth}\em
for all thin clusters $C\in\mathcal C$  and for each dimension
$i\in\{1,\ldots, d\}$ holds the following: if $p(C)$ and a corner  $t$ of a box of $C$
  lie in the same grid interval in dimension $i$, 
that is, if there is a $j$ so that $p_i(C),t_i\in [j,j+1]$, then $t_i=p_i(C)$.
\end{minipage}\ignorespacesafterend 
\end{equation} 

To achieve~\eqref{thinify}, we proceed as follows. 
Let $v$ be a vertex of any thin cluster~$C\in\mathcal C$.
Consider a dimension $i$ where $\ell_i(v)$ or $r_i(v)$ lie in the same grid interval 
as~$p_i(C)$.
Note that $\ell_i(v)\leq p_i(C) \leq r_i(v)$.
In dimension $i$, we shrink the box of $v$ in the following way:
if $\ell_i(v)$ lies in the same grid interval as~$p_i(C)$,
we replace  $\ell_i(v)$ by $p_i(C)$.
Similarly, if $r_i(v)$ lies in the same grid interval as~$p_i(C)$, 
we replace $\ell_i(v)$ by $p_i(C)$.
This procedure is illustrated in Fig.~\ref{fig:thinner}.

Since by shrinking a box we may only lose edges of the corresponding graph,
it suffices to show that every edge is still present.
Since the new box of $v$ still contains $p(C)$,
the vertex $v$ is still adjacent to every other vertex in $C$.
As we change the box of $v$ only within a grid interval, 
the old and the new box of $v$ are in the same position.
Thus,
we do not lose any edge from $v$ to $X$.
Performing this transformation iteratively for every box of $C$  in every dimension,
and for every thin cluster $C\in\mathcal C$, we
obtain a box representation of $H$ satisfying~\eqref{thinify}.

      \begin{figure}[ht]
      \centering
      \input{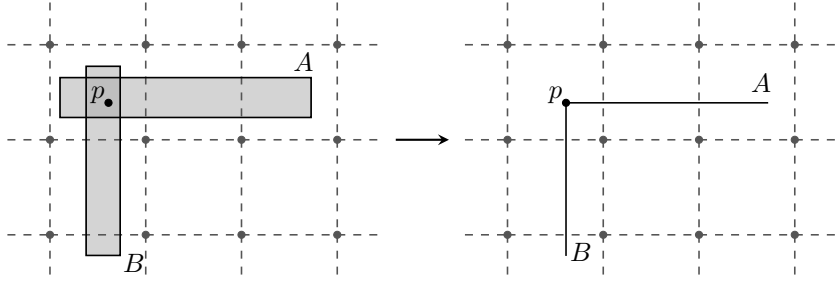}
      \caption{Shrinking the boxes}\label{fig:thinner}
      \end{figure}

\medskip
Next, we claim that
\begin{equation}\label{cl:cluster-topo-id}
\begin{minipage}[c]{0.8\textwidth}\em
there is a pair of distinct thin clusters $C,C' \in \mathcal C$
such that for every $v \in C$ and $v' \in C'$ with $N_H(v) \cap X = N_H(v') \cap X$, the boxes of $v$ and $v'$ are in the same position.
\end{minipage}\ignorespacesafterend 
\end{equation} 
Note that, as $C$ and $C'$ are equivalent, 
there is indeed a bijection between the vertices of $C$ and $C'$ 
that maps a vertex $v$ to $v' \in C'$ with $N_H(v) \cap X = N_H(v') \cap X$.

\medskip

Observe that for the endpoints~$\ell_i(v), r_i(v)$ of the interval representing a vertex~$v \in V(H)$ in the $i$-th dimension, there are at most
$(2k+1)^2$ many choices to select the grid intervals they lie in.
Thus, any set of thin boxes, pairwise not in the same position, has size at most $(2k+1)^{2d}$. 
Because $G$ is devoid of true twins, no cluster has two vertices whose boxes are in the 
same position. 

Recall that every cluster has at most $2^k$ vertices.
Thus, among any choice of more than
$(2k+1)^{2d\cdot 2^k}$ thin clusters there are two thin clusters satisfying~\eqref{cl:cluster-topo-id}. 
As $(2k+1)^{2d\cdot 2^k}\leq (2k+1)^{2(2^k+k+1))\cdot 2^k}$, by~\eqref{cl:box-bounded}, 
and since $\mathcal C$
contains at least $(2k+2)^{2^{k+1}(2^k+k+1)}$ thin  clusters, by~\eqref{cl:bound-thin-boxes}, the claim follows.
\medskip

Consider clusters $C,C'$ as in~\eqref{cl:cluster-topo-id}.
We now embed the deleted cluster $C^*$ in the box representation of $H=G-C^*$. 
For this, choose $\epsilon > 0$ small enough so that
\begin{equation}\label{choiceofeps}
\begin{minipage}[c]{0.8\textwidth}\em
 for all $v\in C$ and $w\in V(H-C)$
and all dimensions $i$ it holds that 
$|s_i-t_i|>\epsilon$, when $s$ is a corner  of the box of 
$v$ and $t$ is a corner  of
the box of $w$.
\end{minipage}\ignorespacesafterend 
\end{equation} 
(If such an $\epsilon$ does not exist, we may again perturb the box representation slightly
so as to guarantee~\eqref{perturb} while keeping~\eqref{thinify}.)

Define $q\in\mathbb R^d$ by setting
\[
q_i=\begin{cases}
1&\text{if $p_i(C)<p_i(C')$}\\
-1&\text{if $p_i(C)>p_i(C')$}\\
0&\text{if $p_i(C)=p_i(C')$}.
\end{cases}
\]
Let $v\mapsto v^*$ be the bijection between $C$ and $C^*$ with $N_G(v)\cap X=N_G(v^*)\cap X$.
We define a box for every $v^*\in C^*$ by taking a copy of the box of $v$ and shifting its
coordinates by the vector~$\epsilon\cdot q$, that is, for every dimension~$i$ we set 
\[
\ell_i(v^*)=\ell_i(v)+\epsilon q_i\text{ and }r_i(v^*)=r_i(v)+\epsilon q_i
\]
Note that, by choice of $\epsilon$, the box of $v^*$ and the box of $v$ are in the same position.

Let~$\tilde{G}$ be the graph defined by this new box representation. We claim that $\tilde G= G$,
which then finishes the proof of the lemma.

To prove this, we first note that we only added edges between vertices in $C^*$ and $H$,
while all other adjacencies remain unchanged.
Next, as
$p(C)+\epsilon q$ is a point that lies in every
box of $C^*$, it follows that $\tilde G[C^*]$ is a complete graph. 
Moreover, by choice of $\epsilon$,
we have \[N_{\tilde{G}}(v^*) \setminus (C \cup C^*) = N_{G}(v) \setminus (C \cup C^*)\]
for any $v\in C$.
In particular, $N_{\tilde{G}}(v^*) \cap C'=\emptyset$.
It remains to show that also $N_{\tilde{G}}(v^*) \cap C=\emptyset$.

For this, let $w^* \in C^*$ and $v \in C$ be arbitrary, 
where we allow that $v=w$.
Let us show that the boxes of $v$ and $w^*$ do not intersect.

Since $v$ and $w'$ are nonadjacent in $H$,
there is a dimension $i$ such that either
$r_i(v)<\ell_i(w')$ or $r_i(w')<\ell_i(v)$.
By symmetry, we may assume $r_i(v)<\ell_i(w')$.
Let $I$ be the grid interval such that $r_i(v)\in I$.
If $\ell_i(w')\notin I$,
then $r_i(v)<\ell_i(w^*)$, since by our construction $\ell_i(w^*)$ is in the same grid interval as $\ell_i(w')$.
This means that
the boxes of $v$ and $w^*$ do not intersect.
Thus, we may assume that $\ell_i(w^*)\in I$.
As $v$ and $w$ are in the same cluster and thus adjacent, 
it follows that $\ell_i(w)\leq r_i(v)$, which implies
that $p_i(C)\in [\ell_i(w),r_i(v)]\subseteq I$. 
Now, \eqref{thinify}
implies that $r_i(v)=p_i(C)=\ell_i(w)$. 

Since $p_i(C)=r_i(v)<\ell_i(w')$, it follows that $p_i(C)<p_i(C')$. 
Thus, $r_i(v)=\ell_i(w)<\ell_i(w)+\epsilon=\ell_i(w^*)$. 
Consequently,
the boxes of $v$ and $w^*$ do not intersect.
This completes the proof.
\end{proof}

We can now prove the main lemma of this section.

\begin{proof}[Proof of Lemma~\ref{dtckernel}]
\sloppy 
Let $I=(G=(V,E), b)$ be an instance of~\boxicity with cluster vertex deletion number~$k$.
We first compute a set $X$ of size $|X|\leq 3k$, so that $G-X$ is a cluster graph.
To this end we use the fact that a graph is a cluster graph if and only if it does not contain a~$P_3$ as an induced subgraph. 
Start with~$X = \emptyset$. If~$G$ contains three vertices $v_1,v_2, v_3$ that induce a~$P_3$ in~$G$ then add these three vertices to~$X$. Reiterate the process on~$G \setminus \{v_1,v_2, v_3\}$ until no more induced~$P_3$ is found. Clearly any optimal solution needs to delete at least one vertex in an induced~$P_3$.
Thus $|X| \leq 3k$.

Next, we iteratively remove one twin of any pair of true twins from $G$ until the graph becomes
free of true twins. By Lemma~\ref{notwinslem}, this does not change the boxicity of $G$. 
In the next step, we divide the clusters of $G-X$ into their equivalence classes which can be done in polynomial time. 
Then we delete clusters from 
every equivalence class until each equivalence class has at most $2(2k+2)^{2^{k+1}(2^k+k+1)}$ members. 
Since every cluster has size at most $2^{|X|} \le 2^{3k}$, by Lemma~\ref{smallclusters}, the resulting 
graph $H$ has size at most  $k^{2^{O(k)}}$. Moreover, Lemma~\ref{clusterdellem} shows that $\boxOp(H)=\boxOp(G)$. 
This completes the proof.
\end{proof}

\section{Proof of Theorem~\ref{pwalgothm}}\label{sec:pwalgothm}

Bounded pathwidth suggest a dynamic programming approach, and this is precisely what we do. 
There is a hitch, though. The standard approach would be to solve the \boxicity problem
on one bag after another of the path decomposition, so that the local solutions can be 
combined to a global one. \boxicity, however, does not permit this: 
as we are constructing the box representation of the graph, we may have to completely rearrange the previous boxes to add a new one. 

Thus, the key issue is to force  the problem to become ``localized''. 
To this end, we introduce a special interval graph~$I^*$ that reflects the path structure of the graph: two vertices are adjacent if and only if they appear in the same bag of the path decomposition. Doing so, we can safely compute local box representions of the subgraphs induced by the bags without paying attention to how these representations overlap. Indeed, the interval graph~$I^*$ gets rid of any unwanted adjacency. 

Having sketched the idea, we now give the formal description of the algorithm.
We say that two interval graphs $I=([\ell_v,r_v])_{v\in V}$ and $I'=([\ell'_{v'},r'_{v'}])_{v'\in V'}$
are \emph{consistent} if the order of the interval endpoints of the common vertices
is the same, that is, if
for all $u,v\in V\cap V'$  
\begin{align*}
\ell_u\leq \ell_v \Leftrightarrow \ell'_u\leq \ell'_v;\quad
r_u\leq r_v\Leftrightarrow r'_u\leq r'_v;\\
 \ell_u\leq r_v\Leftrightarrow \ell'_u\leq r'_v;\text{ and }r_u\leq \ell_v\Leftrightarrow r'_u\leq \ell'_v.
\end{align*}
In particular, if $V'\subseteq V$ then $I'$ is an induced subgraph of $I$.
When we consider tuples $(I_1,\ldots,I_d)$ and $(I'_1,\ldots,I'_d)$ of interval graphs
we say that they are \emph{consistent} if $I_i$ is consistent with $I'_i$ for 
$i=1,\ldots,d$.

\begin{lemma}\label{consistencylem}
If $I=([\ell_v,r_v])_{v\in V}$ and $I'=([\ell'_{v'},r'_{v'}])_{v'\in V'}$ are two consistent interval graphs, 
then there is an interval graph $J$ on $V\cup V'$ that is consistent
with both $I$ and $I'$.
In particular,  any edge in $E(J)\sm (E(I)\cup E(I'))$ 
has one endvertex in $V\sm V'$ and the other in $V'\sm V$.
\end{lemma}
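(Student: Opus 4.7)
The plan is to construct $J$ by interleaving the endpoint orderings of $I$ and $I'$ into a single linear order on the real line. Since the four consistency conditions assert that the endpoints of vertices in $V \cap V'$ appear in the same relative order in $I$ and in $I'$, identifying $\ell_v$ with $\ell'_v$ and $r_v$ with $r'_v$ for every $v \in V \cap V'$ yields a well-defined partial order on the $2|V \cup V'|$ endpoints. Any linear extension of this partial order, realized by distinct real numbers, gives intervals for every vertex in $V \cup V'$ and thereby defines an interval graph $J$ on $V \cup V'$.

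More concretely, I would build this linear extension as follows: start from the endpoint order of $I$ on the real line; for each endpoint $e$ of a vertex in $V' \setminus V$, locate its position in $I'$ relative to the endpoints of $V \cap V'$-vertices (which interleave identically with the $I$-ordering by consistency), and insert $e$ into the corresponding gap in the $I$-order, preserving the $I'$-order among endpoints falling into the same gap.

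Consistency of $J$ with $I$ is then immediate, since the restriction of the constructed order to $V$-endpoints coincides with the $I$-order. Consistency of $J$ with $I'$ holds because the $V \cap V'$-endpoints inherit from $I$ the relative order they have in $I'$ (by hypothesis), while the $V' \setminus V$-endpoints were inserted precisely to preserve their $I'$-order relative to the $V \cap V'$-endpoints; hence the four defining equivalences of consistency hold.

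For the ``in particular'' claim, observe that two intervals intersect iff the left endpoint of each is at most the right endpoint of the other, a condition that depends only on the relative order of endpoints. Thus, whenever $u,v \in V$, one has $uv \in E(J) \Leftrightarrow uv \in E(I)$, and analogously for $V'$; consequently any edge of $J$ outside $E(I) \cup E(I')$ must have one endvertex in $V \setminus V'$ and the other in $V' \setminus V$. The only mildly delicate point is verifying that no cycle arises in the union of the two orderings before taking a linear extension, but this is precisely ensured by the four consistency relations, which make the two orderings agree on every common pair of endpoints.
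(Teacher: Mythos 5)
Your proof is correct and follows essentially the same approach as the paper: the paper realizes the same merge of the two endpoint orders by induction, inserting the endpoints of one vertex of $V'\setminus V$ at a time into the gaps of the already-constructed order, whereas you perform the whole interleaving at once. The key points — that consistency means the two endpoint orders agree on common vertices, and that adjacency depends only on the relative order of endpoints (giving the ``in particular'' clause) — are the same in both arguments.
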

\begin{proof}
Pick $v^*\in V'\sm V$ and apply induction to the pair of consistent interval graphs
$I$ and $I'-v^*$ in order to obtain an interval graph $J'$ on $(V\cup V')\sm\{v^*\}$
that is consistent with $I$ and with $I'$. 
Let $J'=([\tilde{\ell}_v,\tilde{r}_v])_{v\in  (V\cup V')\sm\{v^*\}}$.
Among the interval endpoints of $I'-v^*$, that is in $\bigcup_{v'\in V'\sm\{v^*\}}\{\ell'_{v'},r'_{v'}\}$,
we pick the two consecutive endpoints $p,q$ for which $p\leq \ell'_{v^*}\leq q$. Notice that the endpoint~$p$ (resp.~$q$) might not exist; in such case we simply set~$p = \ell'_{v^*}$ (resp.~$q = \ell'_{v^*}$). Let~$p'$ (resp.~$q'$) be the corresponding endpoint of~$p$ (resp.~$q$) in~$J'$.
Then we put $\tilde{\ell}_{v^*}=\tfrac{1}{2}(p'+q')$ and 
define $\tilde{r}_{v^*}$ in the analogous way. 
Observe that we have~$\tilde{\ell}_{v^*} \leq \tilde{r}_{v^*}$ by the choice of~$p,q$.
Adding the resulting interval $[\tilde{\ell}_{v^*},\tilde{r}_{v^*}]$ to~$J'$
yields an interval graph $J$ on $V\cup V'$ that is consistent with both $I$ and $I'$.

The second assertion of the lemma follows from the first. 
\end{proof}

The following observation is obvious:
\begin{lemma}\label{gridifylem}
If $I$ is an interval graph on $V$ as vertex set, 
then there is an interval graph $J$ on $V$ that is consistent with $I$
and so that each of the intervals of $J$ has its endpoints in $\{1,\ldots,2|V|\}$.
\end{lemma}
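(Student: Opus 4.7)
The plan is to simply relabel the endpoints of the intervals of $I$ by their rank in the linear order of all endpoints. Consistency amounts to preserving the relative order of endpoints, and the rank map does exactly that.

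First I would invoke the tacit assumption from the first paragraph of Section~\ref{sec:prelim}: after a perturbation we may assume that the $2|V|$ interval endpoints of $I=([\ell_v,r_v])_{v\in V}$ are pairwise distinct. Listing these endpoints in increasing order yields a strictly order-preserving bijection $\phi$ from the set $\bigcup_{v\in V}\{\ell_v,r_v\}$ to $\{1,2,\ldots,2|V|\}$. I would then define $J=([\phi(\ell_v),\phi(r_v)])_{v\in V}$. Since $\ell_v<r_v$ and $\phi$ is strictly monotone, $\phi(\ell_v)<\phi(r_v)$, so each $[\phi(\ell_v),\phi(r_v)]$ is a genuine interval, and by construction every endpoint lies in $\{1,\ldots,2|V|\}$.

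To verify consistency, note that for any two endpoints $a,b$ coming from intervals of $I$ we have $a\leq b$ if and only if $\phi(a)\leq\phi(b)$. Each of the four defining inequalities of consistency ($\ell_u\leq\ell_v$, $r_u\leq r_v$, $\ell_u\leq r_v$, and $r_u\leq \ell_v$) therefore transfers verbatim between $I$ and $J$. Hence $J$ is consistent with $I$.

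No step should present any real difficulty: the lemma is the bookkeeping statement that the combinatorial type of an interval representation depends only on the relative order of its endpoints, and any such order can be realised by integer endpoints in $\{1,\ldots,2|V|\}$. The only thing to keep track of is the distinctness of endpoints, which is supplied by the standing perturbation convention.
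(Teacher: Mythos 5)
Your proof is correct and is exactly the standard rank-relabelling argument the paper has in mind; the paper itself declares the lemma ``obvious'' and gives no proof, so there is nothing to diverge from. The one point worth making explicit, which you do, is that the standing perturbation convention guarantees all $2|V|$ endpoints are distinct, so the rank map is a strictly order-preserving bijection onto $\{1,\ldots,2|V|\}$ and all four consistency conditions transfer.
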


      \begin{figure}[ht]
      \centering
      \includegraphics[scale=0.7]{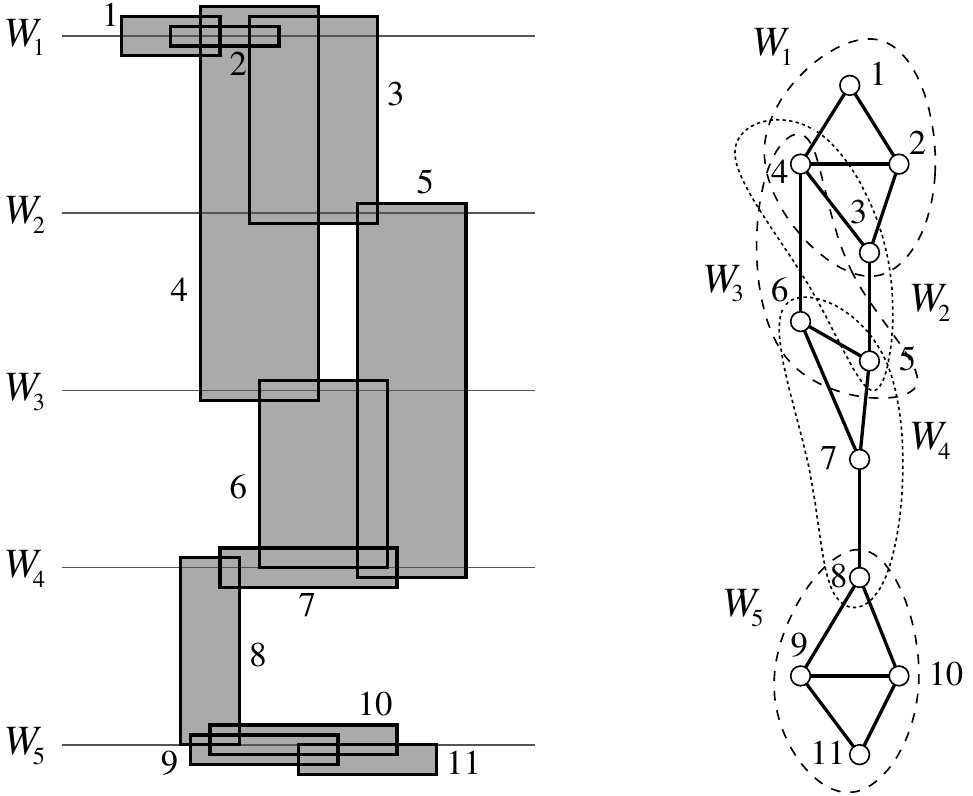}
      \caption{A box representation computed by the algorithm,
where $d=1$.}\label{fig:extradim}
      \end{figure}

We can now state the main result of this section.

\begin{theorem}\label{pwboxthm}
There is an algorithm that, for any 
graph $G$ with a given path decomposition of width~$w$, determines 
in $2^{O(w^2\log w)}\cdot |V(G)|$-time a $d\in\mathbb N$ so that $d\leq \boxOp(G)\leq d+1$.
\end{theorem}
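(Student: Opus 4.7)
The plan is a dynamic program along the path decomposition that uses the interval graph $I^*$ introduced in the section preamble as a ``free extra dimension'' absorbing every non-adjacency between vertices that never co-occur in a bag. Concretely, for each $v \in V(G)$ let $[\mathrm{first}(v),\mathrm{last}(v)]$ be the interval of bag indices containing $v$; by the path decomposition axioms these intervals define an interval graph $I^*$ in which $uv$ is an edge iff $u$ and $v$ share a bag, and $E(G) \subseteq E(I^*)$. Consequently, $G = I_1 \cap \cdots \cap I_d \cap I^*$ for interval graphs $I_1,\ldots,I_d$ if and only if $I_1|_{W_i} \cap \cdots \cap I_d|_{W_i} = G[W_i]$ for every bag $W_i$, which is a purely local condition.

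For a fixed target $d$, the DP state at bag $W_i$ is a $d$-tuple of order types on $W_i$, where an order type is a linear order of the $2|W_i|$ interval endpoints (equivalently, by Lemma~\ref{gridifylem}, an interval graph with endpoints in $\{1,\ldots,2|W_i|\}$). A state is \emph{valid} when the intersection of its $d$ associated interval graphs equals $G[W_i]$, and a transition from $W_i$ to $W_{i+1}$ is allowed when the two states are consistent on $W_i \cap W_{i+1}$ in the sense defined just before Lemma~\ref{consistencylem}. Since $|W_i| \leq w+1$, there are at most $((2w+2)!)^d = 2^{O(dw\log w)}$ states per bag, and Theorem~\ref{boxtwthm} lets us restrict attention to $d \leq w+2$, giving $2^{O(w^2\log w)}$ states per bag overall.

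For correctness, any accepting run produces, dimension by dimension via iterated application of Lemma~\ref{consistencylem}, interval graphs $I_1,\ldots,I_d$ on $V(G)$; at each merging step the ``new'' edges go between a vertex of the already processed prefix that is not in the new bag and a vertex appearing for the first time, and such pairs never co-occur in any bag by the path decomposition axiom. Hence those extra edges are non-edges of $I^*$ and get erased when intersecting with $I^*$, so an accepting run at level $d$ yields a $(d+1)$-box representation and certifies $\boxOp(G) \leq d+1$. Conversely, any $\boxOp(G)$-box representation $G = J_1 \cap \cdots \cap J_{\boxOp(G)}$ induces an accepting DP run at level $\boxOp(G)$ by simply restricting each $J_j$ to each bag. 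Therefore the smallest $d^*$ at which the DP accepts satisfies $d^* \leq \boxOp(G) \leq d^*+1$, and iterating the DP for $d = 0, 1, \ldots, w+1$ runs in $2^{O(w^2\log w)} \cdot n$ total time, with $O(n)$ bags and $2^{O(w^2\log w)}$ states/transitions per bag.

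The main obstacle is the one the authors flag at the start of the section: a box representation is inherently a global object, so local restrictions to bags seem to leave no foothold for a DP — arbitrary global rearrangements might be required to accommodate a new vertex. The key insight that makes the approach work is that inserting $I^*$ as a free extra dimension converts all hard, non-local non-adjacency constraints into constraints that must hold only on individual bags, while Lemma~\ref{consistencylem} guarantees that locally-consistent pieces can always be glued into a global interval graph without introducing any edge that $I^*$ does not already forbid.
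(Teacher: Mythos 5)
Your proposal is correct and follows essentially the same route as the paper: the auxiliary interval graph $I^*$ built from the bag indices as a free extra dimension, a DP whose states are $d$-tuples of endpoint-normalized interval graphs on each bag intersecting to $G[W_s]$, transitions via the consistency relation, gluing by Lemma~\ref{consistencylem} with the new edges absorbed by $I^*$, and the same $2^{O(w^2\log w)}\cdot|V(G)|$ bound after capping $d\leq w+2$ via Theorem~\ref{boxtwthm}. No substantive differences to report.
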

\begin{proof}
By omitting duplicated or empty bags, we may assume the given path decomposition~$\mathcal{W}=\{W_1,\ldots,W_t\}$ of~$G$ 
to have length $t\leq |V(G)|$. 
By Theorem~\ref{boxtwthm}, 
the boxicity of $G$ is at most $w+2$. Thus, it suffices to describe an algorithm 
that checks for some fixed $d\leq w+2$, whether $G$ has a box representation of
dimension at most $d+1$. This algorithm is then executed for every $d=1,2,\ldots, w+2$.

Our algorithm proceeds as follows.
\begin{enumerate}[1.]
\item Put $\mathcal B_0=\{(\emptyset,\ldots,\emptyset)\}$, which we consider
as a tuple of $d$ empty interval graphs.
\item For $s$ from 1 to $t$ do the following.
\begin{enumerate}[a.]
\item Compute all tuples of interval graphs $(J_1,\ldots,J_d)$, where 
each $J_i$ is an interval graph on the vertex set $W_s$ so that all the interval endpoints are in $\{1,\ldots,2w+2\}$,
and where $G[W_s]=\bigcap_{i=1}^dJ_i$.
\item For each of these tuples $(J_1,\ldots,J_d)$, check whether there is a tuple $(J'_1,\ldots,J'_d)\in\mathcal B_{s-1}$ that 
is consistent with $(J_1,\ldots,J_d)$. If yes, add $(J_1,\ldots,J_d)$ to $\mathcal B_s$.
\item If $\mathcal B_s=\emptyset$ exit with $\boxOp(G)> d$.
\end{enumerate}
\item Exit with $\boxOp(G)\leq d+1$.
\end{enumerate} 

For the running time, note that the loop of line~2 is executed $t\leq |V(G)|$ times.
Each execution of a--c requires $2^{O(w^2\log w)}$-time. Indeed, each interval graph $J_i$
as in line~2a has vertex set $W_s$, and can thus be described by at most $2(w+1)$ numbers, all of which 
are in $\{1,\ldots,2w+2\}$. Thus, there are at most $(2w+2)^{2w+2}$ possible 
such interval graphs $J_i$ and therefore at most $(2w+2)^{d(2w+2)}$ tuples 
considered in line~2a. Thus, each execution of lines~2a--c can be performed within the above claimed running time.

\bigskip
To verify that the algorithm is correct, we introduce a bit of notation and state two claims.
Let us define an interval graph $I^*=([\ell^*_v,r^*_v])_{v\in V(G)}$, where
\[
\ell^*_v=\min_{v\in W_i}i\text{ and }r^*_v=\max_{v\in W_j}j.
\]
We may perturb these points slightly, so that all endpoints become distinct.
Note that $I^*$ is the interval graph induced by the path decomposition of $G$;
that is $u,v\in V(G)$ are adjacent in $I^*$ if and only if there is a bag $W_s$
such that $u,v\in W_s$.
Furthermore, we denote by $\Gsegment{s}$ the  induced subgraph $G[\bigcup_{s'=1}^sW_{s'}]$
of $G$ on the first $s$ bags of the path decomposition. 

Inductively, we prove two claims:
\begin{equation}\label{ourboxes}
\begin{minipage}[c]{0.8\textwidth}\em
for every $(J_1,\ldots,J_d)\in\mathcal B_s$ there 
is a tuple $(I_1,\ldots,I_d)$ of interval graphs 
that is consistent with $(J_1,\ldots,J_d)$ and so that 
$\Gsegment{s}=\bigcap_{i=1}^dI_i\cap I^*$.
\end{minipage}\ignorespacesafterend 
\end{equation} 
and 
\begin{equation}\label{lowerbnd}
\begin{minipage}[c]{0.8\textwidth}\em
if there  are interval graphs $I_1,\ldots,I_d$ with 
$\Gsegment{s}=\bigcap_{i=1}^dI_i$ then there is a tuple
$(J_1,\ldots,J_d)\in\mathcal B_s$ that is consistent with $(I_1,\ldots,I_d)$.
\end{minipage}\ignorespacesafterend 
\end{equation} 
Before proving the claims, we show how it follows from them that the algorithm is correct.
Consider the case when the algorithm stops in line~3.
Then $\mathcal B_t\neq\emptyset$, and with Claim~\eqref{ourboxes} for $s=t$, 
we obtain $G=\Gsegment{t}$ as the intersection of $d+1$ interval graphs, 
which proves that $\boxOp(G)\leq d+1$. 
Now suppose the algorithm 
exits in line~2c, that is, that there is an $s$ with $\mathcal B_s=\emptyset$.
From Claim~\eqref{lowerbnd} we deduce that there is no tuple $(I_1,\ldots,I_d)$
of interval graphs with $\Gsegment{s}=\bigcap_{i=1}^dI_i$, which then 
precludes the existence of interval graphs $I'_1,\ldots,I'_d$ with $G=\bigcap_{i=1}^dI'_i$,
as their restrictions $I'_i[\bigcup_{s'=1}^sW_s]$ would intersect to $\Gsegment{s}$. 
Thus,  $\boxOp(G)>d$, which finishes the proof of correctness. 
\medskip

We first prove Claim~\eqref{ourboxes}.
For this, consider $(J_1,\ldots,J_d)\in\mathcal B_s$. 
If $s=1$ then $\bigcap_{i=1}^d J_d=G[W_1]=\Gsegment{1}$, and~\eqref{ourboxes} is satisfied
by setting $I_i=J_i$. 

Hence let $s>1$. 
By definition of $\mathcal B_{s}$
there is a tuple $(J'_1,\ldots,J'_d)\in\mathcal B_{s-1}$ that is consistent with 
$(J_1,\ldots,J_d)$. Induction yields a tuple of interval graphs 
$(I'_1,\ldots I'_d)$ that is consistent with $(J'_1,\ldots,J'_d)$, and for which 
$\Gsegment{s-1}=\bigcap_{i=1}^d I'_i\cap I^*$.
Applying Lemma~\ref{consistencylem} to each consistent pair $I'_i$ and $J_i$
yields an interval graph $I_i$ consistent with both $I'_i$ and $J_i$. Note that $I_i$ is a
supergraph of both~$I'_i$ and~$J_i$. Since $G[W_s]=\bigcap_{i=1}^d J_i$
this means that $\Gsegment{s}$ is a subgraph of $\bigcap_{i=1}^d I_i\cap I^*$.

Consider $i\in\{1,\ldots,d\}$ and an edge $e$ of $I_i$ that 
is neither an edge of $I'_i$, nor of $J_i$. From Lemma~\ref{consistencylem} it follows
that $e$ has an endvertex $u$ in $V(I'_i-J_i)$ and another endvertex $v$
in $V(J_i-I'_i)$. 
Hence $u\in W_{s'}\sm W_s$ for some $s'<s$ and $v\in W_s\sm W_{s-1}$.
In particular, the corresponding intervals, $[\ell^*_u,r^*_u]$ and $[\ell^*_v,r^*_v]$, of $I^*$
do not intersect. Thus
\(
I_i\cap I^* = (I'_i\cup J_i)\cap I^*.
\)
Now,  $\Gsegment{s-1}=\bigcap_{i=1}^d I'_i\cap I^*$
together with
$G[W_s]=\bigcap_{i=1}^d J_i$ implies 
$\Gsegment{s}=\bigcap_{i=1}^d I_i\cap I^*$, as desired.

\medskip
Finally, we show~\eqref{lowerbnd}. If $s=1$ then $\bigcap_{i=1}^d I_i=G[W_1]$,
and thus, by Lemma~\ref{gridifylem}, the algorithm computes in line~2a a tuple $(J_1,\ldots,J_d)$ of
interval graphs that is consistent with $(I_1,\ldots,I_d)$. 
Since every such tuple is consistent with $(\emptyset,\ldots,\emptyset)\in\mathcal B_0$,  
it is added to $\mathcal B_1$ in line~2b. 

Hence consider now $s>1$.
Letting $I'_i$ be the restriction of $I_i$ on $\bigcup_{s'=1}^{s-1}W_{s'}$, we see that
induction yields a tuple $(J'_1,\ldots,J'_d)\in\mathcal B_{s-1}$
that is consistent with $(I'_1,\ldots,I'_d)$.
Next, we apply Lemma~\ref{gridifylem} to $I_i[W_s]$ in order to obtain 
an interval graph $J_i$ that is consistent with $I_i[W_s]$,
and whose intervals have all their endpoints
in $\{1,\ldots,w+2\}$. Consequently, the tuple $(J_1,\ldots,J_d)$ is among the tuples computed
in step~2a of the algorithm. 
Moreover, $(J_1,\ldots,J_d)$ is consistent with 
$(I'_1,\ldots,I'_d)$, and thus, also consistent with $(J'_1,\ldots,J'_d)\in\mathcal B_{s-1}$.
Therefore, $(J_1,\ldots,J_d)$ is added to $\mathcal B_s$ in step~2b of the algorithm.
\end{proof}

We mention that, while the algorithm as given only computes the number $d$, 
we can also recover a concrete box representation of dimension $d+1$. For this, it 
suffices to store for each tuple in $\mathcal B_s$ to which tuple in $\mathcal B_{s-1}$
it is consistent (if there are more, we simply choose one).

\sloppy Together with the algorithm of Bodlaender~\cite{bodlaender96-b} that computes a path-decomposition of a graph~$G$ of width~$\pwOp(G)$ in~$f(\pwOp(G)) \cdot |V(G)|$ time, we obtain Theorem~\ref{pwalgothm}.
We note that the running time could conceivably be improved by using a faster approximation algorithm
with, say, a constant approximation factor.

\section{Proof of Theorem~\ref{bandthm}}\label{sec:bandthm}

It is an open problem whether boxicity is polynomial-time solvable on graphs of bounded treewidth.
While we cannot solve the problem, we can offer
an indication why we suspect boxicity  to be hard. 

The first approach to prove tractability is usually dynamic programming. Evidently, 
this is because Courcelle~\cite{courcelle90} proved that a vast number of problems, 
namely those expressible in monadic second order logic, can be solved in polynomial time
by a generic
dynamic programming algorithm, if the treewidth is bounded. 
However, nobody appears to know how to formulate ``$\boxOp(G)\leq d$?'' in monadic second order logic, and 
it is doubtful that this is possible at all. More generally, dynamic programming 
seems  to fail. Why is that so? We think this is because the tree-like structure
of the input graph does not translate to a tree-like structure in the interval representation:
given an input graph $G$ of bounded treewidth, it may very well be 
the case that at least one interval graph in any optimal 
interval representation of~$G$ has unbounded treewidth.  

To illustrate this, 
consider a $K_{2,n}$, where the smaller bipartition class is comprised of 
two vertices $x$ and $y$, and the larger consists of $v_1,\ldots,v_n$.
Clearly, $K_{2,n}$ has pathwidth~$2$ and boxicity~$2$
as well: in fact, $K_{2,n}+xy$ and $K_{2,n}+\{v_iv_j:i,j\}$ are two interval graphs
whose intersection is $K_{2,n}$. Now, let $I_1,I_2$ be any two interval graphs 
with $K_{2,n}=I_1\cap I_2$. The vertices $x$ and $y$ are not adjacent in at least
one of $I_1$ and $I_2$, say in $I_1$. Suppose that $I_1$ contains a pair of non-adjacent $v_i,v_j$:
then $xv_iyv_jx$ is an induced $4$-cycle, which is impossible in an interval graph. 
Thus, $\{v_i\}_{i=1}^n$ form a clique of size $n$ in $I_1$, and $I_1$ has therefore pathwidth
at least $n-1$.

\medskip
What about stronger width-parameters?
We have found a similar, albeit  more complicated, example for bounded bandwidth,
a parameter even more restrictive than  pathwidth.
Theorem~\ref{bandthm} is a direct consequence of the following lemma.

\begin{lemma}\label{spirallem}
For every $n$ there is a graph $G^n$ of bandwidth at most~$16$ and boxicity~$2$, 
so that in any interval representation $G=I_1\cap I_2$ one of $I_1$ and $I_2$
has treewidth $\geq |V(G^n)|/32$.
\end{lemma}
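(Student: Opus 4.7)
The plan is to exhibit an explicit graph $G^n$ on $\Theta(n)$ vertices with bandwidth at most $16$ and boxicity $2$, and to show that any interval representation $G^n = I_1 \cap I_2$ has one of $I_1,I_2$ containing a clique of size at least $|V(G^n)|/32+1$. Since interval graphs are chordal, $\twOp = \omega - 1$, and such a clique translates immediately into the desired treewidth bound.

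\smallskip
\emph{Step 1 (construction).} I would build $G^n$ as a chain of $\Theta(n)$ constant-size gadgets laid out along an Archimedean-type spiral. Each gadget $H_i$ is a slight enlargement of $K_{2,k}$ for a fixed constant $k$: it contains two non-adjacent hubs $x_i,y_i$ and a common neighborhood $V_i$ of size $k$, and it overlaps with $H_{i+1}$ in several vertices of $V_i$ as well as certain hub-adjacent ``connector'' vertices. The natural linear ordering along the chain is designed to have bandwidth at most $16$, and a $2$-box representation is obtained by placing the gadgets on a spiral in $\mathbb{R}^2$: one coordinate tracks ``position along the chain'', the other distinguishes successive windings. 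The values of $k$ and of the overlap must be tuned so that bandwidth $\le 16$, $\boxOp \le 2$, and Step~3 below all hold simultaneously.

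\smallskip
\emph{Step 2 (local forced cliques).} Given $G^n = I_1 \cap I_2$, the non-edge $x_iy_i\notin E(G^n)$ is absent in at least one of $I_1,I_2$; fix $\sigma(i)\in\{1,2\}$ so that $x_iy_i\notin E(I_{\sigma(i)})$. The argument applied to $K_{2,n}$ just before the lemma applies verbatim inside $H_i$: interval graphs contain no induced $C_4$, so any two vertices $u,v\in V_i$ must be adjacent in $I_{\sigma(i)}$. Thus $V_i$ is a clique of $I_{\sigma(i)}$, and by the Helly property so are $V_i \cup \{x_i\}$ and $V_i \cup \{y_i\}$.

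\smallskip
\emph{Step 3 (propagation of $\sigma$).} The overlap between $H_i$ and $H_{i+1}$ is engineered so that if $\sigma(i)\neq \sigma(i+1)$, a shared vertex $v\in V_i\cap V_{i+1}$ together with hubs of the two gadgets yields an induced $C_4$ in whichever of $I_1,I_2$ misses the relevant hub non-edge, contradicting chordality. Consequently $\sigma$ is constant on every maximal run, and by pigeonhole there is a run $R$ of length at least $n/2$; without loss of generality $\sigma \equiv 1$ on $R$.

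\smallskip
\emph{Step 4 (merging cliques along $R$).} On $R$ each $V_i$ is a clique of $I_1$ with a Helly point $p_i$. The hub vertex $x_i$ (which lies in every $V_i$'s open neighborhood in $G^n$) has its $I_1$-interval meeting each of the $p_j$ that occur in its neighborhood in $H_i, H_{i+1}, \dots$; exploiting these overlaps together with the forced cliques $V_i\cup\{x_i\}$, one shows that the $p_i$'s along $R$ lie in a common point $p$ of the real line. Hence $\bigcup_{i\in R} V_i$ is a single clique of $I_1$, and its size is a constant fraction of $|V(G^n)|$; choosing constants as in Step~1 one gets at least $|V(G^n)|/32$, so $\twOp(I_1) \ge |V(G^n)|/32$.

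\smallskip
\emph{Main obstacle.} The technical heart of the argument is Step~3 together with the matching gadget design in Step~1: the overlap between consecutive gadgets must be rich enough that every ``switch'' of $\sigma$ produces a concrete forbidden $C_4$, yet restricted enough that bandwidth stays $\le 16$ and a $2$-box representation still exists. The spiral layout is precisely what achieves this balance: it gives the canonical boxicity-$2$ witness, and at the same time forces any alternative representation to ``unspool'' along a single common axis, stacking the forced cliques together rather than allowing them to drift apart.
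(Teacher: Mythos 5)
Your overall frame (constant-size $K_{2,k}$-type gadgets, the induced-$C_4$ argument forcing the common neighbourhood $V_i$ to be a clique in the interval graph $I_{\sigma(i)}$ that omits the hub edge, and the reduction of treewidth to clique size in an interval graph) matches the paper's setup. But there is a genuine gap at Step~4, and it is the heart of the lemma. Knowing that every $V_i$ along a run is a clique of $I_1$, and that consecutive cliques overlap, does \emph{not} force their Helly points $p_i$ to coincide: an interval graph can realize an arbitrarily long chain of pairwise-overlapping constant-size cliques with the points $p_1<p_2<\cdots$ simply drifting monotonically along the line, keeping the maximum clique (hence the treewidth) bounded. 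Nothing in your construction rules out this drift, so the claim that $\bigcup_{i\in R}V_i$ is a single clique is unjustified and, as a general principle, false. You need a mechanism that prevents the forced cliques from sliding apart, and your proposal does not supply one.

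The paper's mechanism is essentially the opposite of your Step~3. Rather than forcing $\sigma$ to be constant, the gadget $B$ (an induced path $w_1\ldots w_6$ plus two nonadjacent vertices $u,v$ complete to it) together with the prescribed attachments between consecutive copies forces $\sigma$ to \emph{alternate}: if $u^iv^i\in E(I_k)$ then $u^{i+1}v^{i+1}\in E(I_{3-k})$, and moreover the orientation data of consecutive gadgets is multiplied by a $90^\circ$ rotation matrix. Hence the boxes of the induced path $v^1\ldots v^n$ are arranged in a spiral in $\mathbb R^2$; a spiral cannot drift monotonically in one coordinate, so some vertical line stabs a quarter of the $v^i$-boxes, and those $v^i$ (pairwise nonadjacent in $G^n$ but with overlapping first-coordinate intervals) form a clique of size $n/4$ in $I_1$. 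Note that the large clique consists of the hub-type vertices $v^i$ across \emph{different} gadgets, not of the unions $V_i$ within a run. If you want to salvage your one-dimensional merging strategy, you must either prove an anti-drift lemma for your overlaps (which I do not believe holds for any bounded-bandwidth, boxicity-$2$ gadget chain with $\sigma$ constant) or switch to an alternation-plus-rotation argument of the paper's kind.
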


In light of the lemma, we would like to strengthen the conjecture of Adiga~et al.~\cite{adiga10-b}:
We believe that \boxicity remains NP-complete even for graphs of bounded bandwidth.

\begin{proof}[Proof of Lemma~\ref{spirallem}]
As a basic building block for $G^n$
we use copies of the graph $B$, which consists of a path 
$w_1\ldots w_6$ and two vertices $u,v$ adjacent to each of $w_1,\ldots,w_6$ 
but not to each other; see Figure~\ref{fig:spiralgadget}. 
Clearly, $B$ has boxicity~$2$, and moreover, if $B$
is represented as the intersection of two interval graphs $I_1,I_2$ then
\begin{equation}\label{uvalternative}
\text{\em
for some $k \in \{1,2\}$,  $uv\in E(I_k)$ and $w_1,\ldots, w_6$ is a clique in $I_{3-k}$. 
}
\end{equation}
This follows directly from the fact that $uw_ivw_{j}u$ is an induced $4$-cycle in $B$
for each $1\leq i< j-1 < 6$.

     \begin{figure}[ht]
      \centering
\input{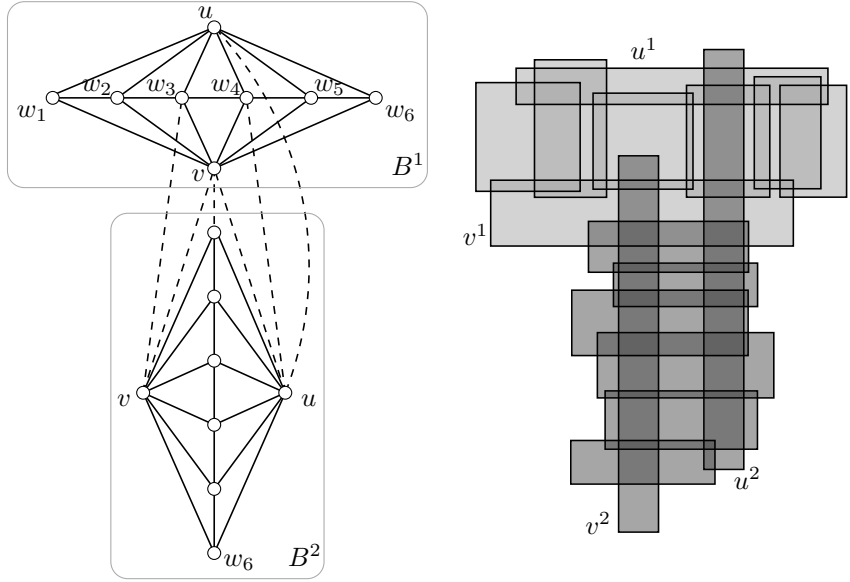}
      \caption{Glueing the gadgets (left) and a geometric realisation (right)}\label{fig:spiralgadget}
      \end{figure}

We define graphs $G^1,G^2,\ldots,G^n$ iteratively by taking as $G^1$ a copy $B^1$ of $B$ 
with vertex set $u^1,v^1,w_1^1,\ldots,w^1_6$. Then, given $G^i$ we obtain $G^{i+1}$
by adding another copy $B^{i+1}$ of $B$ on vertex set $u^{i+1},v^{i+1},w_1^{i+1},\ldots,w^{i+1}_6$,
where we make 
\begin{itemize}\itemsep1pt\parskip0pt 
\item $u^{i+1}$ adjacent to $u^i,v^i$ and $w_4^i$;
\item $v^{i+1}$ adjacent to $v^i$ and $w_3^i$; and
\item $w_1^{i+1}$ adjacent to $v^i$.
\end{itemize}
An indication that $G^n$ has indeed boxicity~$2$ as claimed is given in Figure~\ref{fig:spiralgadget}.
That $G^n$ has bandwidth~$\leq 16$ can also easily be checked. 
We fix two interval graphs $I_1,I_2$, so that 
$G^n=I_1\cap I_2$.

First, we prove that the edge $uv$ flips between consecutive copies of $B$, that is
\begin{equation}\label{uvflip}
\text{\em
for some $k\in\{1,2\}$, $u^iv^i\in E(I_k)$ implies $u^{i+1}v^{i+1}\in E(I_{3-k})$.
}
\end{equation}
To keep notation simple, we consider the case when $i=1$, and we assume that $u^1v^1\in E(I_1)$,
so that our task is to show that $u^2v^2\in E(I_2)$.

Observe that for each $j=1,2,3,5,6$ the vertices $u^1w^1_jv^1u^2u^1$ form a $4$-cycle in $G^n$
and thus in $I_2$. Since $I_2$ is chordal but $u^1$ and $v^1$ are not adjacent in $I_2$, it
follows that $u^2w^1_j\in E(I_2)$. As this edge is not present in $G^n$, 
we have consequently that  
\begin{equation}\label{preY}
\text{\em\ $w^1_4$ is the only neighbour in $I_1$ of $u^2$ among $w^1_1,\ldots,w^1_6$.}
\end{equation}

The graph (a) in Fig.~\ref{fig:forbidden} we call a $Y$, the graph (b) we call an {\it umbrella}.
As both $I_1$ and $I_2$ are interval graphs, neither of them contains a $Y$ or an {\it umbrella} as an induced subgraph.

We next show that 
\begin{equation}\label{goodedge}
v^1w^2_2\in E(I_1).
\end{equation}
From~\eqref{preY}, we deduce that in $I_1$ the induced path $w^1_2\ldots w^1_6$ together with $u^2$
and $w^2_2$ forms an induced $Y$, 
unless $w^2_2$ is adjacent to at least one of the vertices 
$w^1_2,\ldots,w^1_6$. 
Thus, we can choose $j$ be such $w^1_j$ is a neighbour of $w^2_2$ in $I_1$ and closest to $w^1_4$. If $j\neq 4$ then 
$w^1_j\ldots w^1_4u^2w^2_2$ forms an induced cycle of length $\geq 4$, which contradicts the chordality of $I_1$. 
Thus, $w^2_2w^1_4$ is an edge of $I_1$. 

Now $w^1_2\ldots w^1_6$ together
with $v^1$ and $w^2_2$ form an induced umbrella in $I_1$, 
unless $w^2_2$ has further neighbours among $w^1_2,\ldots, w^1_6$, or $w^2_2v^1\in E(I_1)$.
In the latter case, we have proved~\eqref{goodedge}, so assume that in $I_1$,
$w^2_2$ has a second neighbour in $\{w^1_2,\ldots, w^1_6\}$. As $I_1$ is chordal
one of $w^1_3$ and $w^1_5$ has to be adjacent to $w^2_2$. 
By symmetry, let us say that $w^1_3w^2_2\in E(I_1)$. 
Then $w^1_3w^2_2u^2v^1w^1_3$ is a $4$-cycle that is induced unless $w^2_2$ and $v^1$ are
adjacent in $I_1$ (here we use~\eqref{preY} again). This proves~\eqref{goodedge}.

Turning back to $G^n$, we observe that $v^1u^2w^2_2v^2v^1$ form an induced $4$-cycle. Thus,
each of $I_1$ and $I_2$ must contain exactly one of the diagonals $v^1w^2_2$ and $u^2v^2$.
Since we have already proved that $v^1w^2_2\in E(I_1)$ it follows that $u^2v^2\in E(I_2)$.
This finishes the proof of~\eqref{uvflip}.

\medskip
Next, we see that consecutive copies of $B$ rotate by $90^{\circ}$. More formally, 
we associate with a gadget $B^i$ a matrix $D^i\in\mathbb R^{2\times 2}$,
where the first row encodes the orientation of the induced path 
$w^i_1\ldots w^i_6$, while the second row corresponds to $u,v$. If $u^iv^i\in E(I_k)$
for $k=1$ or $k=2$, we set
\[
D^i_{1k}=\begin{cases}
1&\text{if }\ell_{k}(w^i_1)<\ell_{k}(w^i_6)\\
-1&\text{otherwise}
\end{cases}
\text{ and } 
D^i_{1,3-k}=0
\] and
\[
D^i_{2k}=0\text{ and }
D^i_{2,3-k}=\begin{cases}
1&\text{if }r_{3-k}(v^i)<\ell_{3-k}(u^i)\\
-1&\text{otherwise}
\end{cases}
\]
By symmetry, we may assume $B^1$ to be embedded in such a way that 
$u^1v^1\in E(I_1)$, 
$r_2(v^1)<\ell_2(u^1)$
and $\ell_1(w^1_1)<\ell_1(w^1_6)$.
Then $D^1$ is simply the identity matrix. Below we see that $D^i$
is the $(i-1)$th power of the rotation matrix
$R:=
\bigl(\begin{smallmatrix}
0&-1\\ 1&0
\end{smallmatrix} \bigr)$.

We only prove $D^2=R$. For larger $i$ this follows with analogous arguments. 
As we assumed $u^1v^1\in E(I_1)$, it follows from by~\eqref{uvflip}
that $u^2v^2\in E(I_2)$, which implies $D^2_{11}=0$ and $D^2_{22}=0$.
Let us next show that $D^2_{21}=1$, that is
\begin{equation}\label{firstturn}
r_1(v^2)<\ell_1(u^2).
\end{equation}
Indeed, as $u^2v^2\in E(I_2)$,  we either have 
$r_1(v^2)<\ell_1(u^2)$ or $r_1(u^2)<\ell_1(v^2)$. 
By~\eqref{uvalternative}, $w_1^1\ldots w_6^1$ is an induced path in $I_1$, 
and from the assumption that 
$\ell_1(w^1_1)<\ell_1(w^1_6)$ we deduce  $\ell_1(w^1_3)< \ell_1(w^1_4)$.
From~\eqref{preY} it follows that $\ell_1(u^2)\in (r_1(w^1_3),r_1(w^1_4)]$. 
Since $v^2$ is adjacent to $w^1_3$ we now see that 
$r_1(u^2)<\ell_1(v^2)$ is impossible as this would imply 
$r_1(w^1_3)<r_1(u^2)<\ell_1(v^2)$. This shows~\eqref{firstturn}.

To show the rotation, it remains to prove that 
\begin{equation}\label{secondturn}
 \ell_2(w^2_1)>\ell_2(w^2_6),
\end{equation}
which is to say that $D^2_{12}=-1$.

Suppose that $ r_2(v^1)\leq r_2(w^2_1)$. Then since $r_2(v^1)< \ell_2(u^1)$
it follows that each of $w^1_1,\ldots, w^1_6$ is adjacent to $w^2_1$ in $I_2$, 
from which we deduce that $w^2_1$ is not a neighbour of any of $w^1_1,\ldots, w^1_6$
in $I_1$. However, by~\eqref{preY},
then $w^1_2\ldots w^1_6$ induces with $u^2$ and $w^2_1$
a $Y$-subgraph in $I_1$, which is impossible. Thus $r_2(w^2_1)< r_2(v^1)$.

Now, if $\ell_2(w^2_1) <\ell_2(w^2_6)$ then one of $w^2_2,\ldots,w^2_6$
must be adjacent to $v^1$ in $I_2$. Since $I_2$ is a chordal graph
and $w^2_1\ldots w^2_6$ an induced path in $I_2$, this is only possible
if $v^1w^2_2$ is an edge of $I_2$. But, by~\eqref{goodedge}, 
we also have $v^1w^2_2\in E(I_1)$, which contradicts $v^1w^2_2\notin E(G^n)$.
Therefore, we have proved~\eqref{secondturn}.

\medskip
     \begin{figure}[ht]
      \centering
\input{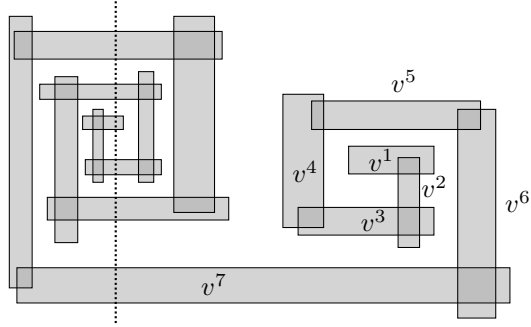}
      \caption{Spiral of $v^i$-boxes, and vertical line intersecting $\geq n/4$ boxes}\label{fig:thespiral}
      \end{figure}
Summing up, the boxes corresponding to the induced
path $v^1v^2\ldots v^n$ are arranged in a spiral pattern. This spiral has two 
options: either it may become ever larger or, after a number of steps, it may 
become smaller and smaller. In both cases, we find a vertical line 
that meets a quarter of the $v^i$-boxes, which translates to a clique of size 
$n/4$ in $I_1$.
In particular, $\twOp(I_1) \ge n/4$.
As $|V(G)| = 8n$, the proof is complete.
\end{proof}

\section{Discussion}\label{sec:discussion}

In this paper, we treated \boxicity from the perspective of parameterized complexity.
We presented 
\begin{itemize}
	\item a parameterized algorithm for \boxicity with respect to the parameter cluster vertex deletion;
	\item \sloppy an additive 1-approximation algorithm for \boxicity that runs in~$2^{O(w^2\log w)}\cdot n$ time where~$w$ is the width of a given path decomposition of the input graph;
	\item and a family of graphs of bounded bandwidth that need, in any optimal box representation, dimensions of unbounded treewidth.
\end{itemize}

In some respect, the method of our first algorithm is a generalization of 
the true twin reduction. The key insight is that if there are many
vertex sets (the clusters) that are identical in the graph then 
many of these sets will have essentially the same geometric realization. 
Deleting one of these many ``geometric twins'' is unlikely to change boxicity.

We believe this approach can exploited further. Indeed, we are convinced that
with similar methods as developed in this article, we can also formulate 
a parameterized algorithm for \boxicity when the parameter is \emph{distance to stars} -- the smallest number of vertices whose removal results 
in a disjoint union of stars.
Like cluster vertex deletion, distance to stars provides a non-trivial parameterization for \boxicity between vertex cover (solved) and feedback vertex set (open). 
Moreover, given a graph~$G$, computing a minimum set~$X \subseteq V(G)$ such that~$G[V - X]$ is a disjoint union of stars can be done in~$f(|X|) \cdot |V(G)|^{O(1)}$ time~\cite{NRT05}.

\smallskip
Our second algorithm yields an additive 1-approximation for \boxicity on graphs of bounded pathwidth.
Two questions that immediately arise are: can we get rid of the additive 1, such that the algorithm computes $\boxOp(G)$ exactly? Can the algorithm be lifted to run on graphs of bounded treewidth? 

We stated earlier our impression of the first question -- we conjecture \boxicity to be NP-complete on graphs of bounded bandwidth,
thus including graphs of bounded pathwidth.
Our reasoning is that, in any optimal representation
as the intersection of interval graphs, 
some of the interval graphs may have unbounded treewidth, even if the input graph has bounded bandwidth.
This seems to annul the main advantage of bounded pathwidth, 
that the number of box representation of the union of previous bags can be compressed
to a size bounded by a function of the pathwidth.

\medskip
We turn to the second question: why is it difficult to extend the algorithm  to graphs of bounded treewidth?
We rely heavily on the fact that the one extra dimension is sufficient to reflect 
the path decomposition of the whole graph. If we mimick this approach for bounded treewidth
we have to  describe the tree decomposition of the graph with as few extra dimensions as possible. 
How many extra dimensions would we need? As many as the boxicity of the chordal supergraph
obtained by turning each bag of the decomposition into a clique.
If we started with a path decomposition, the boxicity will be one. For a general tree decomposition, 
however, it could well be that the boxicity of this chordal graph is about the 
treewidth of the input graph~\cite{chandran07}. 
This suggests that there might be input graphs $G$ for which $\boxOp(G)$ is much lower than the number of dimensions required to describe their tree decomposition, 
which makes it impossible to approximate using only the techniques of Section~\ref{sec:pwalgothm}.

\bibliographystyle{abbrv}
\bibliography{main}

\vfill
\small
\vskip2mm plus 1fill
\noindent
Version \today
\bigbreak

\noindent
Henning Bruhn {\tt <henning.bruhn@uni-ulm.de>}\\
Morgan Chopin {\tt <morgan.chopin@uni-ulm.de>}\\
Felix Joos {\tt <felix.joos@uni-ulm.de>}\\
Institut f\"ur Optimierung und Operations Research\\
Universit\"at Ulm, Germany\\[3pt]
Oliver Schaudt
{\tt <schaudto@uni-koeln.de>}\\
Institut f\"ur Informatik\\
Universit\"at zu K\"oln, Germany

\end{document}